\newtheorem{theorem}{Theorem}
\newtheorem{proposition}{Proposition}
\newtheorem{corollary}{Corollary}
\newtheorem{lemma}{Lemma}
\newcommand{\R}{{\mathbb R}}
\newcommand{\Z}{{\mathbb Z}}
\newcommand{\set}[2]{ \left\{ #1 \ \left| \ #2 \right. \right\}}
\newcommand{\ang}[1]{\left< #1 \right>}
\newcommand{\GL}{\operatorname{GL}}
\newcommand{\T}{{\mathbb T}}
\newcommand{\M}{D}
\newcommand{\const}{{\mathcal C}}
\newcommand{\one}{\mathbbm{1}}
\newcommand{\indicator}{\mathbbm{1}}
\newcommand{\tilefam}{\mathcal{T}_*}
\newcommand{\ts}[1]{{{#1}_\square}}
\title[Smooth, compactly-supported frames adapted to frequency tilings]{On frames of smooth, compactly-supported wave packets adapted to tilings of frequency space}
\author{Philip T. Gressman}
\thanks{The author was partially supported by NSF grant DMS-2054602.}
\begin{document}
\dedicatory{Dedicated to the memory of Guido Weiss and his generous and engaging way of sharing mathematics with students.}
\begin{abstract}
We establish a broad notion of admissible tilings of frequency space which admit associated wave packet frames with elements which are smooth and compactly supported. The framework is designed to allow for tile geometries which are minimally constrained by the need to accommodate Schwartz tails on the Fourier side and goes beyond the usual scale of geometries ranging from Gabor to wavelet-type decompositions \cite{MR2179584}. The approach builds on techniques of Hern\'{a}ndez, Labate and Weiss \cite{MR1916862} and Labate, Weiss, and Wilson \cite{MR2066831} as well as a classical result of Ingham \cite{MR1574706} characterizing the best-possible Fourier decay for functions of compact support.
\end{abstract}
\maketitle

\section{Introduction}

The article \cite{MR4302195} of the author and E. Urheim establishes stable norm decay inequalities for multilinear oscillatory integral functionals through the use of certain continuous frame decompositions of $L^2(\R)$ which behave like Gabor systems at low frequencies and so-called ``wave atoms'' at higher frequencies. Here the adjective ``continuous'' refers to decompositions which involve integrals over all translations of some countable family of window functions, where ``discrete'' would instead refer to summation over discrete lattices of translations. An issue of central concern there was the need for uniform control of frame constants independent of the frequency at which the transition occurs. At the time it was noted that the existence of analogous discrete frames with window functions that are smooth and compactly supported would likely have simplified the analysis and would also open interesting pathways to studying degenerate multilinear oscillatory integrals of a similar form. 

The construction of continuous frames whose wave packets have compact \textit{Fourier} support is essentially trivial and can be accomplished under extremely mild assumptions on the arrangement of frequency tiles (see, for example, the notion of bounded admissible partitions of unity as described in \cite{MR2179584}). Unfortunately, the switch to compact \textit{physical space} support introduces Schwartz tails on the frequency side which can, in extreme cases, have nontrivial effects. Passing from continuous to discrete frames introduces even more difficulties because the composition of analysis and synthesis operators for the frame is not translation-invariant and so a complete understanding of its properties does not follow from elementary Fourier analysis. The goal of the present article is to approach these difficulties directly for the purpose of establishing the existence of discrete frames of smooth, compactly-supported wave packets adapted to decompositions of frequency space which are as general as possible. A secondary but still important goal is to demonstrate existence of such discrete frames in a way that yields frame constants (and even choices of wave packets) which are uniform over very broad classes of decompositions so that, as was done for multilinear oscillatory integrals, it is possible to analyze functions or operators using parameter-dependent decompositions of frequency space while still proving inequalities that are independent of parameter.

Let $|\cdot|$ denote the $\ell^\infty$ norm on $\R^d$, i.e., $| (x_1,\ldots,x_d) | := \max \{|x_1|,\ldots,|x_d|\}$.  Given any matrix $D \in \GL_d(\R)$, we define a \textit{tile} $T$ to be a pair $(D,\xi) \in \GL_d (\R) \times \R^d$ and define its tile set $\ts{T} \subset \R^d$ to equal 
\[ \ts{T} := \set{ \eta \in \R^d}{ |D^{-1} (\eta-\xi)| \leq \frac{1}{2}}. \]
Modulo only the finite symmetry group generated by permutation of coordinates and coordinate-wise reflection,  the tile set $\ts{T}$ determines $T$ (see Proposition \ref{pretileprop}) so it is only a small exaggeration to informally regard tiles themselves as subsets of $\R^d$.
In particular, the notation $\one_T$ will always refer to the indicator function on $\R^d$ the tile set $\ts{T}$. 
The symbol $\mathcal T$ will denote the set of all tiles and $\dot{\mathcal T}$ indicates the set of all tiles centered at the origin, i.e., tiles of the form $(D,0)$.  
Given a tile $T$ and a real number $t >0$, the tile $t T$ is defined to be the tile obtained by dilating $T$ isotropically by a factor of $t$ in each direction while preserving its center, i.e., if $T = (D,\xi)$, then $t T := (t D,\xi)$. Similarly, $\dot T$ is the translation of $T$ which is centered at the origin, i.e., $\dot T = (D,0)$.
Given any function $\varphi$ on $\R^d$ and any tile $T = (D,\xi)$, we make the definitions
\begin{align}
\varphi_{T,k}(x) & := e^{2 \pi i \xi_0 \cdot x} \sqrt{|\det D|} \varphi(k + D^t x) & \forall x,k \in \R^d, \\
\varphi^T(\xi') & := \varphi (D^{-1} (\xi' - \xi)) & \forall \xi' \in \R^d. \label{dilation2}
\end{align}
Here $D^t$ is the usual transpose. Dilations of the first type $\varphi_{T,k}$ will primarily be applied to functions in physical space; dilations of the form $\varphi^T$ are generally reserved for functions in frequency space (so, in particular, as $\widehat{\cdot}$ will be used to denote the Fourier transform, the notation $\widehat{\eta}^T$ is to be understood as $(\widehat{\eta})^T$).

The first theorem establishes that a discrete frame with smooth, compactly-supported window $\psi$ adapted to an essentially arbitrary family of tiles $\mathcal{T}_*$ must exist whenever $\mathcal{T}_*$ is in some sense oversampling and admits a corresponding \textit{continuous} frame adapted to $\tilefam$ with compactly supported window $g$. The precise statement is as follows.
\begin{theorem}[Discrete frames from continuous frames]
Let $\mathcal{T}_*$ be any countable set of tiles in $\R^d$. \label{compactthm}
Suppose that there exists a compactly supported $g \in L^2(\R^d)$, $A,B > 0$ and $\theta \in (0,1)$ such that
\begin{equation} A \leq \sum_{T \in \mathcal{T}_*} |\widehat{g}^T \! (\xi)|^2 \one_{(1-\theta) T}(\xi) \leq \sum_{T \in \mathcal{T}_*} |\widehat{g}^T \! (\xi)|^2 \leq B \label{chyp} \end{equation}
for almost every $\xi \in \R^d$.  Then for all sufficiently small $\epsilon > 0$ (depending only on $A$, $B$, $\theta$, and $d$), there exists a compactly supported $\psi \in C^\infty(\R^d)$ such that
\begin{equation} (1-\epsilon)A ||f||^2 \leq \sum_{\substack{ T \in \mathcal{T}_* \\ k \in \Z^d}} |\ang{f,\psi_{T,k}}|^2 \leq (1 + \epsilon) B ||f||^2 \label{cframe} \end{equation}
for all $f \in L^2(\R^d)$. This $\psi$ may be taken to depend only on $g$, $A$, $B$, $\epsilon$, $\theta$, and $d$. In particular, it can be defined independently of the specific tiling $\tilefam$.
\end{theorem}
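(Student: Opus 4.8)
\section*{Proof proposal}

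The plan is to reduce the discrete frame inequality \eqref{cframe} to two pointwise estimates on functions of the frequency variable, and then to produce $\psi$ by smoothing $g$ against a compactly supported kernel whose Fourier transform is as close as possible to a cutoff adapted to the unit tile. Fixing a tile $T=(D,\xi)$, a direct computation from \eqref{dilation2} together with Plancherel gives
\[ \ang{f,\psi_{T,k}} = |\det D|^{1/2}\int_{\R^d}\widehat f(\xi+D\omega)\,\overline{\widehat\psi(\omega)}\,e^{-2\pi i\omega\cdot k}\,d\omega, \]
so for each fixed $T$ the numbers $(\ang{f,\psi_{T,k}})_{k\in\Z^d}$ are, up to the factor $|\det D|^{1/2}$, the Fourier coefficients of the $\Z^d$-periodization of $\omega\mapsto\widehat f(\xi+D\omega)\overline{\widehat\psi(\omega)}$. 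Summing the Parseval identity for this periodization over $T\in\tilefam$, expanding the square into its diagonal and off-diagonal parts, and changing variables via $\zeta=\xi+D\omega$, I obtain
\[ \sum_{T,k}|\ang{f,\psi_{T,k}}|^2 = \int_{\R^d}|\widehat f(\zeta)|^2\,\Theta_\psi(\zeta)\,d\zeta + \sum_{T\in\tilefam}\sum_{m\in\Z^d\setminus\{0\}}\int_{\R^d}\widehat f(\zeta)\,\overline{\widehat\psi^T(\zeta)}\,\overline{\widehat f(\zeta+Dm)}\,\widehat\psi^T(\zeta+Dm)\,d\zeta, \]
where $\Theta_\psi(\zeta):=\sum_{T\in\tilefam}|\widehat\psi^T(\zeta)|^2$ and $\widehat\psi^T$ is as in \eqref{dilation2}. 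This is the analogue in the present framework of the Calder\'on-type reproducing identities of \cite{MR1916862,MR2066831}; the interchanges of sum and integral are legitimate because a compactly supported $\psi\in C^\infty$ has $\widehat\psi$ decaying faster than any polynomial (and $g$ compactly supported forces $\widehat g$ bounded). Applying the arithmetic-geometric mean inequality to $|\widehat f(\zeta)\,\overline{\widehat f(\zeta+Dm)}|$ and using the obvious translation symmetry, the second term is bounded in absolute value by $\int_{\R^d}|\widehat f(\zeta)|^2\,\Delta_\psi(\zeta)\,d\zeta$ with $\Delta_\psi(\zeta):=\sum_{T\in\tilefam}\sum_{m\in\Z^d\setminus\{0\}}|\widehat\psi^T(\zeta)|\,|\widehat\psi^T(\zeta+Dm)|$. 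It therefore suffices to construct a compactly supported $\psi\in C^\infty$ with $(1-\epsilon)A\le\Theta_\psi\le(1+\epsilon)B$ and $\Delta_\psi\le\epsilon\min(A,B)$ almost everywhere.

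For the construction I fix, independently of $\tilefam$, a cutoff $\chi_0\in C_c^\infty(\R^d)$ with $\chi_0\equiv1$ on $(1-\theta/2)T_0$ and $\operatorname{supp}\chi_0\subseteq(1-\theta/4)T_0$ (here $T_0$ is the unit tile and $0\le\chi_0\le1$), together with a compactly supported, positive-definite $w_1\in C^\infty(\R^d)$ with $w_1(0)=1$, $\widehat{w_1}\ge0$, whose Fourier decay is as fast as Ingham's theorem \cite{MR1574706} permits, i.e.\ roughly $\widehat{w_1}(\xi)\lesssim\exp\bigl(-c|\xi|/\log^2(e+|\xi|)\bigr)$ --- faster than any polynomial but, necessarily, slower than any exponential; such $w_1$ is produced by taking $w_1=\phi*\widetilde\phi/\|\phi\|_2^2$ for an Ingham bump $\phi$. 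For a large parameter $L$ (to be chosen) set $w_L(x):=w_1(x/L)$ and define
\[ \psi:=g*\bigl((\chi_0)^{\vee}\,w_L\bigr), \]
which lies in $C_c^\infty(\R^d)$ because $(\chi_0)^\vee$ is Schwartz and $w_L$ is a smooth cutoff. Then $\widehat\psi=\widehat g\cdot\widehat\kappa$ with $\widehat\kappa:=\chi_0*\widehat{w_L}$; since the family $\widehat{w_L}$ is nonnegative with total mass $1$ and concentrates at scale $L^{-1}$ as $L\to\infty$, one has $0\le\widehat\kappa\le1$, $\widehat\kappa\to\chi_0$ locally uniformly, $\widehat\kappa\ge1-\epsilon$ on $(1-\theta)T_0$ once $L$ is large, and --- the key point --- $|\widehat\kappa(\xi)|\le C L^d\exp\bigl(-cL(|\xi|_\infty-\tfrac12)_+/\log^2(\cdots)\bigr)$ for $|\xi|_\infty>\tfrac12$, because $\operatorname{supp}\chi_0$ is separated from the boundary of $T_0$ by a distance $\gtrsim\theta$ so that the convolution defining $\widehat\kappa$ outside $T_0$ only sees the rapidly decaying tails of $\widehat{w_L}$. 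All of $\chi_0$, $w_1$, and $L$ depend only on $g,A,B,\epsilon,\theta,d$, so $\psi$ is defined with no reference to $\tilefam$.

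Granting this, the main-term bound is immediate: since $\widehat\kappa^T\ge1-\epsilon$ on $(1-\theta)T$ and $0\le\widehat\kappa^T\le1$ everywhere, $(1-\epsilon)^2\sum_T|\widehat g^T|^2\one_{(1-\theta)T}\le\Theta_\psi=\sum_T|\widehat g^T|^2|\widehat\kappa^T|^2\le\sum_T|\widehat g^T|^2$, and \eqref{chyp} yields $(1-\epsilon)^2A\le\Theta_\psi\le B$. The estimate $\Delta_\psi\le\epsilon\min(A,B)$ is the heart of the matter and the step I expect to be the main obstacle. The mechanism is that $|\widehat\psi^T(\zeta)|\,|\widehat\psi^T(\zeta+Dm)|=|\widehat g^T(\zeta)\widehat\kappa^T(\zeta)|\,|\widehat g^T(\zeta+Dm)\widehat\kappa^T(\zeta+Dm)|$ can be non-negligible only when $\widehat\kappa$ is non-negligible both at $D^{-1}(\zeta-\xi)$ and at $D^{-1}(\zeta-\xi)+m$; as these differ by $m\neq0$, at least one of them lies outside $T_0$, so at least one factor of $\widehat\kappa^T$ is a tail whose size is governed by the almost-exponential decay above. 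I would split $\Delta_\psi$ along these lines: a weighted arithmetic-geometric mean inequality on $|\widehat g^T(\zeta)\widehat g^T(\zeta+Dm)|$, with a weight that grows in $|m|$ slightly more slowly than $\widehat\kappa$'s tails decay, collapses the expression to sums of the form $\sum_T|\widehat g^T(\zeta)|^2\cdot(\text{small tail factor})$, at which point the upper bound $\sum_T|\widehat g^T(\zeta)|^2\le B$ from \eqref{chyp} absorbs the tile sum. Carrying out this reduction \emph{uniformly over all tilings admitted by the hypothesis} --- that is, showing the tail contributions summed over all tiles and all nonzero integer translates are negligible --- is where the tile geometry interacts most delicately with the Schwartz-tail phenomenon, and is exactly where Ingham's sharp rate is indispensable: with any slower decay a residual sum survives that some admissible $\tilefam$ could inflate, whereas with Ingham's rate one may choose $L=L(\epsilon,\theta,A,B,d)$ large enough to force $\Delta_\psi\le\epsilon\min(A,B)$ for every $\tilefam$ obeying \eqref{chyp}. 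Combining the two bounds and absorbing the resulting $\epsilon$'s gives \eqref{cframe}.
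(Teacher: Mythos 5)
Your reduction (periodization, Parseval, diagonal versus off-diagonal split) and your mollifier ($\widehat\psi=\widehat g\cdot(\chi_0*\widehat{w_L})$ built from an Ingham-type bump) are essentially the same ingredients the paper uses, and the main-term bound is fine. The genuine gap is exactly the step you yourself flag: the pointwise claim $\Delta_\psi\le\epsilon\min(A,B)$, and the mechanism you sketch for it does not work. After your weighted arithmetic--geometric mean step on $|\widehat g^T(\zeta)\widehat g^T(\zeta+Dm)|$, only half of each term collapses to $\sum_T|\widehat g^T(\zeta)|^2\cdot(\text{tail factor})$; the other half is of the form $\sum_T\sum_{m\neq0}\lambda_m^{-1}|\widehat g^T(\zeta+D_Tm)|^2\cdot(\text{tail factor})$, whose evaluation points depend on $T$ and $m$, so \eqref{chyp} cannot be applied to it, and bounding $|\widehat g^T(\zeta+D_Tm)|$ by $\|\widehat g\|_\infty$ fails because the hypotheses allow infinitely many tiles of $\tilefam$ to contain a given $\zeta$ (e.g.\ the wavelet tiling $(2^j,0)$, which the paper explicitly intends to cover): for each such $T$ the factor $\widehat\kappa^T(\zeta)$ has size comparable to $1$ and the inner sum over $m$ contributes a fixed positive amount, so the $T$-sum diverges. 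Once you have fixed $\zeta$ pointwise there is no change of variables left to convert those $T$-dependent sample points back to $\zeta$; that conversion is only available inside the periodized integral, one tile at a time.

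This is precisely how the paper's Lemma~\ref{onescale} is organized: the weighted AM--GM is applied to the full products $|\widehat f\,\widehat g|$ at the two lattice-shifted points, with a weight $w$ depending on the tile-relative position (a small constant $a$ inside the unit tile, $|u|^{d+1}$ outside) rather than on $|m|$, so that after unperiodization the off-diagonal error for each tile is $\delta\int|\widehat f|^2|\widehat g^T|^2+C_d\delta^{-1}\int|\widehat f|^2|\widehat g^T|^2\omega^T$, with every quantity evaluated at the same frequency; summing over tiles then requires only the same-point hypotheses, namely \eqref{chyp} together with the uniform smallness of $|\widehat\eta^T|^2\omega^T$, i.e.\ hypothesis \eqref{assumption2} of Theorem~\ref{frametheorem}. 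Relatedly, your claim that Ingham's optimal decay rate is indispensable at this step is misplaced for Theorem~\ref{compactthm}: in the paper's argument the tail weight is only the polynomial $|u|^{d+1}\indicator_{|u|\ge1/2}$, any $W$ with $\sup_t t^{d+1}/W(t)^2<\infty$ suffices, and the smallness comes from the $\epsilon$-dilation; the sharp subexponential decay is what Theorem~\ref{maintheorem} needs, where overlap counts of dilated tiles grow like $t^dK(t)$. To repair your proof, restructure the off-diagonal estimate along the lines of Lemma~\ref{onescale} (weight attached to tile-relative position, AM--GM performed inside the per-tile periodized integral) instead of attempting a pointwise bound on $\Delta_\psi$.
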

To understand the significance of the hypotheses, it is helpful to recall the consequences of \eqref{cframe}. Proposition 4.1 of Hern\'{a}ndez, Labate, and Weiss \cite{MR1916862} establishes that any $\psi$ satisfying \eqref{cframe} must also satisfy
\[ \sum_{T \in \mathcal{T}_*} |\widehat{\psi}^T\!(\xi)|^2 \leq (1 + \epsilon) B \text{ for a.e. } \xi \in \R^d \]
and an argument similar to the one presented there (testing the inequality on functions $f$ whose Fourier support is concentrated on vanishingly small balls) also establishes that
\[ \sum_{T \in \mathcal{T}_*}  |\widehat{\psi}^T\!(\xi)|^2 \geq (1-\epsilon)A  \text{ for a.e. } \xi \in \R^d. \]
Thus, modulo perturbations of the constants, the inequality \eqref{cframe} implies that \eqref{chyp} holds up to but not including the indicator function $\one_{(1-\theta) T}$.  But Theorem \ref{compactthm} cannot hold without some additional restriction of this sort; even when $\theta = 0$, for example, the Balian-Low Theorem demonstrates that the analogue of Theorem \ref{compactthm} is false (see, for example, Theorem 3.1.d in \cite{MR1350699} and the commentary immediately following that theorem; for any continuous $\psi$ of compact support, its Zak transform is continuous and therefore always has a zero).

Theorem \ref{compactthm} can be used, for example, as a means to establish the existence of wavelet frames. Here the tiles would have the form $(2^j,0)$ (for the one dimensional case) with $j$ ranging over all integers. The tile sets have infinite overlap at the origin, but as far as the hypotheses \eqref{chyp} are concerned, this infinite overlap is easily managed by simply taking a function $g$ with integral zero so that its Fourier transform vanishes at the origin. In fact, it is not especially difficult to see that \textit{any} choice of $g$ which is compactly supported, at least $C^1$, and has integral zero will satisfy \eqref{chyp} for some nonzero values of $A$ and $B$. Similar but more exotic tilings of frequency space adapted to the geometry of curves in the plane (for example, the usual decomposition of the neighborhood of a circle into boxes of size $2^{-j} \times 2^{-j/2}$ at distance roughly $2^{-j}$ to the circle) are possible as well provided that the Fourier transform of $g$ vanishes on a suitably chosen parabola. This may be of some interest in the development of axiomatic approaches to decoupling theory (see Section 7.3 of Guth's survey \cite{guth} for some interesting observations suggesting the value that such an endeavor might have).

In some sense, the question as to whether a smooth, compactly supported window function exists for a particular tiling of frequency space comes down to a technical question about the effect of Schwartz tails, much like question of the effect of the indicator function $\one_{(1-\theta) T}$ in \eqref{chyp}. This means in practice that some non-local criterion on tile geometry is necessary (non-local here meaning that one must compare tiles that are more than simply roughly adjacent or overlapping). A common approach when formulating such a criterion is to constrain the geometry of tiles to settings which are effectively spaces of homogeneous type and then to assume that the size of each tile is controlled by some slowly-growing function of its distance to the origin; see, for example, \cites{MR2885560,MR3189276,MR4044680,MR2362408}. Perhaps the most interesting example of tilings of this sort which is neither Gabor- nor wavelet-like is the case of so-called ``wave atoms,'' in which frequency space is decomposed into isotropic balls of radius $\sim R^{1/2}$ at distance $\sim R$ to the origin for $R \geq 1$ (as is done for the high frequency decomposition in \cite{MR4302195}).

What we seek to do here is to develop a notion of admissible tilings which is, in some sense, as broad as possible. This should mean, e.g., that it includes tiling geometries based on truly multiparameter tile geometries. While it must involve comparisons of tiles at some distance, the criterion used for comparison should also ideally be relatively easy to check (i.e., for any tile $T_1$, it should hopefully be necessary to compare its geometry to only finitely many other tiles $T_2$ even if there is no uniform bound on the number of comparisons as $T_1$ varies) and should not assume greater regularity than is necessary. These are the goals which motivate the second main theorem, Theorem \ref{maintheorem}, below.

To state the theorem, an auxiliary definition is in order. If $T_i := (D_i, \xi_i)$ are tiles for $i = 1,2$, let us define
\[ d(T_1, T_2) := \ln_2 \max \{ |D_1^{-1} D_2|, |D_2^{-1} D_1| \}, \]
where $|\cdot|$ here denotes the operator norm $\ell^\infty \rightarrow \ell^\infty$ and $\ln_2$ is the standard logarithm base $2$. For obvious reasons, $d$ is not literally a metric on tiles because it is independent of centers. However, it does become a genuine metric on the set of origin-centered tile sets (see Lemma \ref{metriclemma}). The choice of base $2$ for logarithms is purely for convenience, as it means that tiles which differ in size by a factor of $2$ have distance $1$. Using this notion of distance, we now state the definition of \textit{admissible tilings}, which are, roughly speaking, a decomposition of $\R^d$ into tile sets which are approximately disjoint and satisfy certain regularity properties guaranteeing that the shapes and sizes of tiles vary slowly at a local level. To make this definition, 
fix some constant $\const > 1$, some $\theta \in (0,1)$ and some nondecreasing function $K(t)$ on $[1,\infty)$ such that 
\begin{equation} K(1) \geq 1 \text{ and } \int_1^\infty \frac{\ln K(t)}{t^2} dt < \infty. \label{kcond} \end{equation}
A collection of tiles ${\mathcal T}_* \subset {\mathcal T}$ will be called an \textit{admissible tiling} of $\R^d$ with respect to $(\const,\theta,K)$ when the following criteria hold:
\begin{enumerate}
\item (Covering) Almost every $\xi \in \R^d$ belongs to the tile set $\ts{(1-\theta) T}$ for some $T \in \mathcal{T}_*$ and does not belong to $\ts{T}$ for more than $\const$ distinct tiles $T \in \mathcal{T}_*$.
\item (Regularity) There exists a function $\rho : \mathcal{T}_* \rightarrow [1,\infty)$ such that
\begin{enumerate}
\item If $T_1,T_2 \in \mathcal{T}_*$ and the tiles sets of $T_2$ and $\rho(T_1) T_1$ intersect, then $d(\dot T_1,\dot T_2) \leq \const$. In other words, the proportionally enlarged tile set of $\rho(T_1) T_1$ only intersects tile sets $\ts{(T_2)}$ having similar shape and size as measured by the tile geometry metric.
\item For each $t \geq 1$, the set  \[ \set{ \ts{\dot T}}{ \rho(T) \leq t \text{ for some } T \in \mathcal{T}_*}, \]
i.e., the set of origin-centered tile sets with the same shape as some $T \in {\mathcal T}_*$ having $\rho(T) \leq t$,
can be covered by no more than $K(t)$ metric balls of radius $\const$.  Roughly speaking, this means the diversity of shapes and sizes of tiles grows subexponentially as a function of $\rho$.
\end{enumerate}
\end{enumerate}
While the definition of admissible tiling depends on the choice of $\const$, $\theta$, and $K(t)$, it should be noted that increasing $\const$ or $K(t)$ and decreasing $\theta$ preserves the property of admissibility without the need to change the function $\rho$.

The main theorem for admissible tilings is as follows.
\begin{theorem}[Frames for admissible tilings] \label{maintheorem}
For given $\const > 1$, $\theta \in (0,1)$, dimension $d \geq 1$, nondecreasing function $K : [1,\infty) \rightarrow [1,\infty)$ satisfying \eqref{kcond} and all $\epsilon>0$ sufficiently small (depending on $\const, \theta$, $d$, and $K$) there exists a real function $\eta$ on $\R^d$ which is $C^\infty$ and compact support with the following properties:
\begin{enumerate}
\item This $\eta(x)$ is even as a function of each of each coordinate $x_i$ and invariant under permutations of the $x_i$. In particular, for any tile $T$, this means that the set of translations $\{\eta_{T,k}\}_{k \in \Z^d}$ depends only on the tile set $\ts{T}$.
\item Suppose ${\mathcal T}_*$ is an admissible tiling of $\R^d$ with respect to $(\const,\theta,K)$. Then
\begin{equation} (1-\epsilon) ||f||^2 \leq \sum_{\substack{T \in \mathcal{T}_* \\ k \in \Z^d}} \left|\ang{f,\eta_{T,k}} \right|^2 \leq (\const + \epsilon) ||f||^2 \label{themainframe} \end{equation}
for all $f \in L^2(\R^d)$. 
\end{enumerate}
\end{theorem}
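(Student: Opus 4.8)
The plan is to derive Theorem~\ref{maintheorem} from Theorem~\ref{compactthm}. Fix the data $(\const,\theta,d,K)$ and an auxiliary small $\epsilon_0>0$. I would construct \emph{one} compactly supported $g\in L^2(\R^d)$, invariant under the finite group generated by permutations and sign-changes of the coordinates, such that \emph{every} admissible tiling $\mathcal{T}_*$ with respect to $(\const,\theta,K)$ satisfies hypothesis \eqref{chyp} of Theorem~\ref{compactthm} with the \emph{same} $\theta$ and with constants $A\ge 1-\epsilon_0$ and $B\le\const+\epsilon_0$. Granting this, Theorem~\ref{compactthm} applied to $A^{-1/2}g$ (whose associated constants are $1$ and $B/A\le\const+2\epsilon_0$) produces a single $C^\infty$, compactly supported $\psi$, defined independently of the tiling, satisfying \eqref{cframe}; one sets $\eta:=\psi$ and chooses the $\epsilon$ of Theorem~\ref{compactthm}, and then $\epsilon_0$, small enough relative to the target tolerance to obtain \eqref{themainframe}. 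Property~(1) then follows from the symmetry of $g$ (which the construction of $\psi$ inherits): for any signed permutation matrix $\sigma$ one has $\ts{(D\sigma,\xi_T)}=\ts{(D,\xi_T)}$, and $\eta(\sigma^t\,\cdot\,)=\eta$ gives $\eta_{(D\sigma,\xi_T),k}=\eta_{(D,\xi_T),\sigma^{-t}k}$, so $\{\eta_{T,k}\}_{k\in\Z^d}$ depends only on $\ts{T}$.

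For the window I would take $g$ to be the product of a compactly supported $\Phi$ with the inverse Fourier transform of a symmetric bump $\chi\in C_c^\infty(\R^d)$, so that $\widehat g=\chi*\widehat\Phi$ and $g$ has compact support. Here $0\le\chi\le1$, $\chi\equiv1$ on $\{|\zeta|\le\tfrac12(1-\tfrac{3\theta}4)\}$, and $\operatorname{supp}\chi\subseteq\{|\zeta|\le\tfrac12(1-\tfrac\theta2)\}$ — note the margin: $\operatorname{supp}\chi$ lies \emph{strictly} inside $\{|\zeta|\le\tfrac12\}$. The function $\Phi$ is chosen with $\widehat\Phi\ge0$, $\int\widehat\Phi=1$, at least $1-\delta$ of its mass in $\{|u|\le\theta/8\}$ (a small $\delta$, arranged by the dilation $\widehat\Phi(u)=c\lambda^d|\widehat{\Phi_0}(\lambda u)|^2$ for $\lambda$ large), and the Fourier decay $|\widehat{\Phi_0}(u)|\lesssim e^{-|u|h(|u|)}$ with $h(t):=\tfrac1{2t}\bigl(\ln K(c_0 t)+(2d+4)\ln(t+2)+C\bigr)$. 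The existence of a compactly supported $\Phi_0$ with decay of exactly this borderline type is the classical theorem of Ingham \cite{MR1574706}: its hypothesis $\int_1^\infty h(t)\,t^{-1}dt<\infty$ is — after a change of variables, and for \emph{any} numerical constant $c_0$ — equivalent to condition \eqref{kcond}, while $t\mapsto t\,h(t)$ is nondecreasing (making $e^{-2\lambda|u|h(\lambda|u|)}$ radially decreasing). Since $\widehat g=\chi*\widehat\Phi$, one gets $\widehat g\le1$ everywhere and $\widehat g\ge1-\delta$ on $\{|\zeta|\le(1-\theta)/2\}$ (as $(1-\theta)/2+\theta/8=\tfrac12(1-\tfrac{3\theta}4)$), so $A:=\inf_{|\zeta|\le(1-\theta)/2}|\widehat g(\zeta)|^2\ge(1-\delta)^2$; and, because $\operatorname{supp}\chi$ is strictly inside $\{|\zeta|\le\tfrac12\}$ while $\widehat\Phi$ concentrates at scale $\ll\theta$, one gets $|\widehat g(\zeta)|\le W(|\zeta|)$ for a decreasing $W$ that is \emph{small} already at $|\zeta|=\tfrac12$ and satisfies $W(\rho)^2\lesssim_d e^{-2C}\rho^{-4d-8}K(c_0\rho)^{-1}$ for $\rho\ge\tfrac12$ — in particular $W(\tfrac12)$ and the whole tail can be driven below any prescribed threshold by taking $C$ large.

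With $g$ fixed, the lower bound in \eqref{chyp} is immediate: for a.e.\ $\xi$ the Covering condition supplies $T=(D,\xi_T)\in\mathcal{T}_*$ with $\xi\in\ts{(1-\theta)T}$, so $|\widehat g^T(\xi)|^2\one_{(1-\theta)T}(\xi)=|\widehat g(D^{-1}(\xi-\xi_T))|^2\ge A$. For the upper bound I would split $\sum_{T\in\mathcal{T}_*}|\widehat g^T(\xi)|^2$ according to whether $\xi\in\ts{T}$. The sum over $\{T:\xi\in\ts{T}\}$ is $\le\const\sup|\widehat g|^2\le\const$ by the overlap clause of the Covering condition. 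For the remainder, partition $\{T:\xi\notin\ts{T}\}$ into $\mathcal{F}_j:=\{T=(D,\xi_T):2^{j-1}<|D^{-1}(\xi-\xi_T)|\le 2^j\}$, $j\ge0$, on which $|\widehat g^T(\xi)|\le W(2^{j-1})$. The crux is the counting estimate, valid for a.e.\ $\xi$,
\[ \#\{T\in\mathcal{T}_*:\xi\in\ts{2^{j+1}T}\}\ \lesssim_{d,\const}\ K(2^{j+1})\,2^{jd}\qquad(j\ge0), \]
which bounds $|\mathcal{F}_j|$ (since $T\in\mathcal{F}_j$ forces $\xi\in\ts{2^{j+1}T}$). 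Feeding this and the decay of $W$ into $\sum_{j\ge0}|\mathcal{F}_j|\,W(2^{j-1})^2$, and using that $c_0$ was taken large enough that $K(2^{j+1})/K(c_0 2^{j-1})\le1$, the sum telescopes down to $\lesssim_{d,\const}e^{-2C}\le\epsilon_0$. Hence $B:=\sup_\xi\sum_T|\widehat g^T(\xi)|^2\le\const+\epsilon_0$, which completes the reduction set up in the first paragraph.

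The counting estimate is where the admissibility hypotheses do their work, and I expect it to be the main obstacle. Fix $\xi$ and pick $T_0\in\mathcal{T}_*$ with $\xi\in\ts{(1-\theta)T_0}$. If a tile $T$ has $\xi\in\ts{2^{j+1}T}$ and $\rho(T)\ge2^{j+1}$, then $\ts{\rho(T)T}\supseteq\ts{2^{j+1}T}\ni\xi\in\ts{T_0}$, so regularity condition (2)(a) forces $d(\dot T,\dot T_0)\le\const$; if instead $\rho(T)<2^{j+1}$, regularity condition (2)(b) with $t=2^{j+1}$ places $\ts{\dot T}$ in one of at most $K(2^{j+1})$ balls of radius $\const$ in the tile-geometry metric (a genuine metric, by Lemma~\ref{metriclemma}). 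Thus the origin-centered tile sets of all counted tiles lie in a family of at most $1+K(2^{j+1})$ such metric balls. For a single ball, centered at $\dot S=(D_S,0)$ say: any counted $T=(D,\xi_T)$ with $d(\dot T,\dot S)\le\const$ has $\|D^{-1}D_S\|,\|D_S^{-1}D\|\le2^\const$, hence $|\det D|\asymp_{d,\const}|\det D_S|$; and $\xi\in\ts{2^{j+1}T}$ confines $\ts{T}$ to $\xi+D_S\{|\cdot|\le2^{\const+j+1}\}$, a set of volume $\asymp_{d,\const}2^{jd}|\det D_S|$. Since tile sets of distinct tiles of $\mathcal{T}_*$ overlap at most $\const$-fold (Covering condition) and each has volume $\gtrsim_{d,\const}|\det D_S|$, at most $\lesssim_{d,\const}2^{jd}$ of them fit inside that set; multiplying by $1+K(2^{j+1})$ and using $K\ge1$ gives the estimate. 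What remains is routine but delicate bookkeeping: tracking the powers of $2$ and the packing constants through the two displays so that the $K$-factors genuinely cancel (this is what pins down $c_0$), reconciling the enlarged $\ell^\infty$-balls in the definition of $\chi$ with the lower bound, and passing to a decreasing majorant of $h$ wherever Ingham's theorem requires one.
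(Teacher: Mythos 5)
Your proposal is correct and follows essentially the same route as the paper: it reduces to Theorem \ref{compactthm} by verifying \eqref{chyp} with $A$ near $1$ and $B$ near $\const$ for a single Ingham-type window whose Fourier decay is engineered to beat $K$, with the lower bound coming from the covering condition and the upper bound from the bounded-overlap clause plus a subexponential count of tiles $T$ with $\xi \in \ts{(tT)}$ obtained exactly as in Propositions \ref{overlapcountprop} and \ref{overlapcount} (regularity conditions (a) and (b) plus a volume-packing argument). The remaining differences are cosmetic: dyadic shells in place of the paper's integral over dilations $t$, and the smallness knob placed in the additive constant $C$ of the decay rate rather than in the dilation parameter $\epsilon$ of Corollary \ref{almostbl}.
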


For a nontrivial example of a setting in which this theorem applies, one can take boxes in the Mikhlin-H\"{o}rmander tiling of $\R^d$, i.e.,
\[ \mathcal{D} := \set { \prod_{i=1}^d [s_i 2^{j_1}, s_i 2^{j_i+1}] }{ j_1,\ldots,j_d \in \Z, \ s_1,\ldots,s_d \in \{-1,1\}}, \]
and generate an admissible tiling $\mathcal{T}_*$ by subdividing the tile $\prod_{i=1}^d [s_i 2^{j_1}, s_i 2^{j_i+1}]$ into approximately $\left[\ln_2 \left( 2 + \max_{i=1,\ldots,d} |j_i| \right) \right]^{1+\delta}$ equal parts in each direction (for a total of approximately $\left[\ln_2 \left( 2 + \max_{i=1,\ldots,d} |j_i| \right) \right]^{d(1+\delta)}$ congruent subtiles), dilating each box by a factor of $1+\delta_1$ for some small positive $\delta$, and defining $\rho(T) := \left[\ln_2 \left( 2 + \max_{i=1,\ldots,d} |j_i| \right) \right]^{1+\delta_2}$ for each tile $T \in {\mathcal T}_*$ which originally came from $\prod_{i=1}^d [s_i 2^{j_1}, s_i 2^{j_i+1}]$ (where again $\delta_2$ is a small positive number). This logarithmic subdivision is in some sense necessitated by the fact that the single window function $\eta$ promised by Theorem \ref{maintheorem} generates a frame for \textit{any} admissible tiling $\mathcal{T}_*$ with respect to $(\const,\theta,K)$ and yields uniform frame bounds over all such $\mathcal{T}_*$. The extra smallness is necessary to compensate for the inability to formulate any sort of vanishing moment conditions which properly apply to all possible $\mathcal{T}_*$. The fact that \text{only} logarithmic-type subdivision is necessary is a consequence of a precise characterization of the maximal Schwartz tail decay which can be exhibited by smooth functions of compact support; without this additional analysis, exponentially finer subdivision is all that can be tolerated.

A second sample application of Theorem \ref{maintheorem} which directly resolves the frame construction issue raised in  \cite{MR4302195} is described in the following proposition.
\begin{proposition}
 Fix any positive increasing function $r$ on $[1,\infty)$ such that \label{adjacentcorr}
\[ \liminf_{t \rightarrow \infty} \frac{\ln r(t)}{\ln \ln \ln t} > 1. \]
There is a constant $c > 0$ depending only on $r$ and the dimension $d$ such that the following is true.
Suppose that $\R^d$ is tiled by a family $\mathcal B$ of nonoverlapping closed, isotropic boxes $B$ each with side length $\ell(B) \geq 1$. If $\mathcal B$ has the property that any two adjacent boxes $B_1, B_2 \in {\mathcal B}$ (i.e., having common boundary points) also satisfy the comparability condition
\begin{equation} \max \{ \ell(B_1), \ell(B_2) \} \leq \min \{\ell(B_1), \ell(B_2) \} \left[ 1 + \frac{1}{r (\max \{ \ell(B_1), \ell(B_2) \})} \right], \label{adjacency} \end{equation}
then any collection $\tilefam$ of tiles $T$ whose tile sets are exactly $(1+2c)B$ for each $B \in \mathcal{B}$ must be an admissible tiling for some choice of $(\const,\theta,K(t))$ which depends only on this $r$ and the dimension $d$.
\end{proposition}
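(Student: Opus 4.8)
The plan is to verify directly that $\tilefam$ satisfies the two axioms of an admissible tiling for a suitable $(\const,\theta,K)$, with $c$ and that triple all controlled by one quantitative fact about how slowly the side lengths in $\mathcal B$ can change. An isotropic box $B$ of side length $\ell(B)$ centered at $x_B$ is the tile set of the tile $(\ell(B)I,x_B)$, so the requirement that $(1+2c)B$ be a tile set of some $T\in\tilefam$ forces $T=T_B:=((1+2c)\ell(B)\,I,x_B)$; the origin-centered tiles $\dot T_{B_1},\dot T_{B_2}$ then have $D_1^{-1}D_2=(\ell(B_2)/\ell(B_1))I$, so (the boxes being axis-parallel) $d(\dot T_{B_1},\dot T_{B_2})=|\ln_2(\ell(B_2)/\ell(B_1))|$ and controlling the tile-geometry metric is exactly controlling ratios of side lengths. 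Put $\theta:=c/(1+c)$, so that $(1-\theta)(1+2c)>1$; since $\mathcal B$ already covers $\R^d$ and $B\subset(1-\theta)(1+2c)B=\ts{(1-\theta)T_B}$, the covering half of axiom (1) holds automatically. Everything else --- bounded overlap, the regularity function $\rho$ with its two properties, and the integrability \eqref{kcond} of the resulting $K$ --- will come from the following separation statement.

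\emph{Separation lemma.} There are $\kappa>0$, $c_0\in(0,1]$ and $L_0\ge1$, all depending only on $r$ and $d$, so that any two boxes $B,B'$ of $\mathcal B$ with $\ell(B)\ge\max\{2\ell(B'),L_0\}$ have $\ell^\infty$-distance at least $\kappa\,r(c_0\ell(B))\,\ell(B)$; informally, a box can be close only to boxes of comparable size, and reaching boxes half its size costs a distance proportional to $r$ evaluated at that scale. I would prove it by joining a point of $B$ to the nearest point of $B'$ by a polygonal path of at most $d$ axis-parallel segments, of total length $\le d\cdot\operatorname{dist}_{\ell^\infty}(B,B')$. The reason for insisting on axis-parallel segments is that such a segment meets every cube it crosses in a chord whose length is exactly that cube's side length --- there is no ``corner clipping'' loss --- except at the at most $d+1$ cubes containing an endpoint or a turn of the path. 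Writing $B=C_0,C_1,\dots,C_M=B'$ for the cubes crossed in order, \eqref{adjacency} gives $|\ln\ell(C_{j+1})-\ln\ell(C_j)|\le 1/r(\min\{\ell(C_j),\ell(C_{j+1})\})$, and a telescoping estimate shows that reaching the first index $j^\ast$ with $\ell(C_{j^\ast})<\ell(B)/2$ requires $\gtrsim r(c_0\ell(B))$ steps, all through cubes of side comparable to $\ell(B)$; summing chord lengths over this initial portion and comparing with $d\cdot\operatorname{dist}_{\ell^\infty}(B,B')$ gives the bound once $\ell(B)\ge L_0$. (When $\ell(B)<L_0$ all relevant side lengths already lie in the fixed interval $[1,L_0]$, so no separation is needed.) \emph{This lemma --- especially making the chord-length accounting around turns and endpoints genuinely airtight --- is where I expect essentially all the work to be.}

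\emph{Assembly of the axioms.} Fix $c>0$ small, depending only on $r$ and $d$, smaller than $\kappa\,r(c_0L_0)$ (and than $\kappa\,r(1)$). \emph{Bounded overlap:} if $\xi\in(1+2c)B$ then $\operatorname{dist}_{\ell^\infty}(\xi,B)\le c\ell(B)$, so if also $\xi\in B_0$ the lemma (applied to whichever of $B,B_0$ is larger, and trivial when both side lengths are below $L_0$) forces $\ell(B_0)/C_0\le\ell(B)\le C_0\ell(B_0)$ for some $C_0=C_0(r,d)$; all boxes $B$ with $\xi\in(1+2c)B$ are then pairwise essentially disjoint subsets of one ball of radius $\lesssim C_0\ell(B_0)$, each of volume $\gtrsim(\ell(B_0)/C_0)^d$, so their number is at most some $N=N(r,d)$. \emph{Regularity:} set $\rho(T_B):=\max\{1,\tfrac{\kappa}{4(1+2c)}r(c_0\ell(B))\}$. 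If $\ts{T_{B_2}}$ meets $\ts{\rho(T_{B_1})T_{B_1}}$ then $\operatorname{dist}_{\ell^\infty}(B_1,B_2)\le\tfrac12(1+2c)\rho(T_{B_1})\ell(B_1)+c\ell(B_2)$; a short bootstrap (first bounding $\ell(B_2)\le3\ell(B_1)$ using smallness of $c$, then invoking the contrapositive of the lemma) yields $\tfrac12\ell(B_1)<\ell(B_2)<2\ell(B_1)$, hence $d(\dot T_{B_1},\dot T_{B_2})\le1$; so 2(a) holds once $\const\ge\max\{N,1\}+1$. For 2(b): $\rho(T_B)$ depends on $B$ only through $\ell(B)$ and is nondecreasing in it, so $\{\ts{\dot T_B}:\rho(T_B)\le t\}$ consists of origin-centered cubes of side $(1+2c)\ell$ with $\ell$ confined to an interval $[1,\Phi(t)]$, $\Phi(t)\asymp r^{-1}(Ct)$; in the metric $d$ this set lies in an interval of length $\asymp\log_2 r^{-1}(Ct)$, so it is covered by $K(t):=\lceil\log_2\Phi(t)\rceil+2$ balls of radius $\const$. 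Finally, \eqref{kcond}: substituting $s=r^{-1}(Ct)$ turns $\int_1^\infty\ln K(t)\,t^{-2}\,dt$ into a vanishing boundary term plus a constant multiple of $\int^\infty(r(s)\,s\ln s)^{-1}\,ds$, and the hypothesis $\liminf_{s\to\infty}\ln r(s)/\ln\ln\ln s>1$ forces $r(s)\gtrsim(\ln\ln s)^{1+\delta}$ for some $\delta>0$, after which $v=\ln\ln s$ reduces the integral to $\int^\infty v^{-1-\delta}\,dv<\infty$. This exhibits $(\const,\theta,K)$ --- and $c$ --- depending only on $r$ and $d$, as required.
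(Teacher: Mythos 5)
Your argument is correct and is essentially the paper's own: your separation lemma is just the contrapositive of the paper's estimate that a chain of $k$ adjacencies changes side lengths by at most a factor $(1+1/r(\ell(B_1)))^{\pm k}$ (the paper builds the chain by moving coordinate-by-coordinate in steps of length $c\ell(B_1)$ rather than by counting chords along an axis-parallel path), and the remaining assembly---$\rho(T_B)\sim r(\ell(B))$, bounded overlap from packing nonoverlapping boxes of comparable volume, and a covering count $K(t)\lesssim e^{Ct^{1/(1+\delta)}}$ checked against \eqref{kcond} via $r(s)\gtrsim(\ln\ln s)^{1+\delta}$---matches the paper's proof step for step. The only slip is the intermediate claim $\tfrac12\ell(B_1)<\ell(B_2)<2\ell(B_1)$, hence $d(\dot T_{B_1},\dot T_{B_2})\le 1$, in your verification of regularity (a): in the regime where all relevant side lengths lie below your threshold $L_0$ the separation lemma is vacuous and one only gets comparability with an $(r,d)$-dependent constant, which is harmless because $\const$ is permitted to depend on $r$ and $d$ (this is exactly how the paper states its comparability as well).
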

The tilings $\mathcal B$ associated with the geometry in \cite{MR4302195} all have the property that the boxes $B \in \mathcal B$ have integer side lengths and adjacent boxes have side lengths differing by at most $1$, so taking $r(t) := \max\{1,t-1\}$ and applying the proposition and Theorem \ref{maintheorem} provides for the existence of a single window function generating a frame for any possible geometry of this sort with constants that are independent of the particular tiling $\mathcal B$.

\begin{proof}[Proof of Proposition \ref{adjacentcorr}]
To show that Theorem \ref{maintheorem} applies uniformly, the first step is to bound the relative size of boxes $B_1$ and $B_{k+1} \in \mathcal{B}$ which are not necessarily adjacent but are connected by a sequence of at most $k$ ``hops'' from one box to an adjacent one. 
By induction on the basic condition \eqref{adjacency} and using the monotonicity of $r$, if $B_1,\ldots,B_{k+1}$ is any finite sequence of boxes in $\mathcal B$ such that adjacent boxes in the sequence share common boundary points, then
\[  \ell(B_1)  \left[ 1 + \frac{1}{r ( \ell(B_1))} \right]^{-k} \leq \ell(B_{k+1}) \leq \ell(B_1)  \left[ 1 + \frac{1}{r ( \ell(B_1))} \right]^{k}. \]
Since the function $r$ is necessarily bounded below, if the number of ``hops'' $k$ is bounded above by $d \lceil  r (\ell(B_1)) + 1 \rceil$, then the result will be that $\ell(B_{k+1})$ must be comparable to $\ell(B_1)$ with a constant $c$ that depends only on the function $r$ and the dimension. Now if $(x_1,\ldots,x_d)$ are the coordinates of any point in $B_1$, any point $(x_1 + \delta,x_2,\ldots,x_d)$ for $|\delta| \leq c \ell(B_1)$ will either belong to $B_1$ or a box adjacent to $B_1$ because the distance is too short to pass entirely out of both $B_1$ \textit{and} a box adjacent to it in the first direction. Repeating this process, i.e., moving in the first direction no more than distance $c \ell(B_1)$, at most $\lceil r (\ell(B_1)) + 1)\rceil$ separate times and then continuing in each subsequent coordinate implies that every point $(y_1,\ldots,y_d)$ with $|x-y| < c \lceil  r (\ell(B_1)) + 1 \rceil \ell(B_1)$ belongs to a box $B'$ in a chain of at most $d \lceil  r (\ell(B_1)) + 1 \rceil$ adjacencies back to $B_1$ and consequently every box $B'$ intersecting the dilated box $(1 + 2 c \lceil  r (\ell(B_1)) + 1 \rceil) B_1$ will have side length uniformly comparable to the side length of $B_1$. 

Now for each $B \in {\mathcal B}$, let $T_B$ be a tile whose tile set equals the dilation $(1 + 2c) B$, let $\tilefam$ be the collection of all such $T_B$, let $\rho(T_B) := 1 + \frac{2c}{1+2c} r(\ell(B))$, and let $\theta := 1 - \frac{1}{1+2c}$. Clearly every $\xi \in \R^d$ belongs to $(1-\theta) T$ for some $T \in \tilefam$. 
Suppose that the tile set of $\rho(T) T$ intersects the tile set of a tile $T' \in \tilefam$. Fix $B'' \in \mathcal{B}$ to be any box in $\mathcal{B}$ containing a point in the intersection $\ts{(\rho(T)T)} \cap \ts{T'}$. The tile $T'$ is itself a $(1+2c)$ dilation of some box $B' \in \mathcal B$ and $\ts{T'}$ intersects $B''$, so the side lengths of $B'$ and $B''$ must be uniformly comparable. Likewise $\rho(T)T$ is a $(1 + 2c( r(B)+1))$ dilation of some box $B \in \mathcal B$ and $\ts{(\rho(T) T)}$ intersects $B''$, so the side lengths of $B$ and $B''$ are also uniformly comparable. This implies that the side lengths of $\ts{T}$ and $\ts{T'}$ are uniformly comparable and hence the distance $d(T,T')$ is uniformly bounded above for all such $T$ and $T'$.
This establishes the first regularity condition within the definition of admissible tilings. Moreover, when combined with Proposition \ref{overlapcountprop} in Section \ref{geomsec}, the fact that the boxes in $\mathcal B$ are nonoverlapping implies a uniform upper bound on the number of distinct tiles $T$ whose tile sets can contain any fixed $\xi \in \R^d$ (since containing the same point $\xi$ implies by the argument just given that the side lengths of the corresponding boxes $B \in {\mathcal B}$ are uniformly comparable).

Now all that remains to be established is the second regularity condition concerning the growth in size of the family of tiles with $\rho(T) \leq t$.
The set of those tiles $T$ with $\rho(T) \leq t$ are exactly all tiles whose side lengths $\ell$ satisfy $1 + \frac{2c}{1+2c} r(\ell) \leq t$. By assumption on $r$, there is some $\delta > 0$ such that  $r(s) \geq (\ln \ln s)^{1 + \delta}$
for all $s$ sufficiently large. Thus if $\ell$ is larger than the threshold $s$,  it must be the case that
 \[ \ell \leq e^{e^{\left(\frac{1+2c}{2c} (t-1) \right)^{1/(1+\delta)}}}. \]
Since every isotropic tile at the origin of side length $\ell$ is at a metric distance at most $1$ from a tile with side length equal to $2^j$ for some $j$, it follows that the maximum number $K(t)$ of unit metric balls required to cover all $\ts{\dot{T}}$ with $\rho(T) \leq t$ can be taken to depend logarithmically on the upper bound for $\ell$ just established. Specifically fixing
\[ K(t) := C e^{\left(\frac{1+2c}{2c} (t-1) \right)^{1/(1+\delta)}} \]
for an appropriately-chosen constant $C$ depending on $r$ and $d$ gives a $K$ satisfying the remaining regularity condition as well as \eqref{kcond}.
\end{proof}

The structure of the rest of this paper is as follows. Section \ref{discretesec} establishes an intermediate result concerning the relationship between discrete and continuous frames \textit{a la} results of Hern\'{a}ndez, Labate and Weiss \cite{MR1916862} and Labate, Weiss, and Wilson \cite{MR2066831}. Section \ref{smoothsec} establishes the existence of certain smooth functions of compact support with optimal Fourier decay properties. Section \ref{compactsec} contains the proof of Theorem \ref{compactthm} using the results of the previous two sections. Section \ref{geomsec} establishes a number of important properties of the metric on tiles, and finally Section \ref{mainproofsec} is devoted to the proof of Theorem \ref{maintheorem}, which is formulated as an application of Theorem \ref{compactthm}.

\section{Discrete frames as approximations of continuous frames}
\label{discretesec}
The principal idea which underlies the proofs in this paper is that under appropriate conditions, one may regard discrete frames as approximations of continuous frames. This is an idea which goes back in various forms to work of Daubechies \cite{MR1066587}.  The approach followed here builds most directly on work of Hern\'{a}ndez, Labate and Weiss \cite{MR1916862} and Labate, Weiss, and Wilson \cite{MR2066831}.  The specific hypothesis \eqref{assumption2} below also has clear connections to the uncertainty principle of Gr\"{o}chenig and Malinnikova \cite{MR3010124} and in particular indicates that one should expect Theorem \ref{frametheorem} to produce only oversampled frames when the $g_j$ are sufficiently regular (meaning that the frames will not in general be Riesz bases).
\begin{theorem}[Comparing discrete and continuous frames]
For each $d \geq 1$, there exists a positive constant $c_d$ such that the following holds. \label{frametheorem}
Suppose $\{g_j\}_{j=1}^\infty$ is a sequence of functions in $L^2(\R^d)$ and $(D_j,\xi_j)$ is a sequence of tiles in $\R^d$. If there exist positive constants $0 < A \leq B$ and $\epsilon \in [0,1)$ such that the inequalities
\begin{equation} A \leq \sum_{j=1}^\infty |\widehat{g}_j(\xi)|^2 \leq B, \label{assumption1} \end{equation}
\begin{equation}  \sum_{j=1}^\infty |\widehat{g}_j(\xi)|^2 |D_j^{-1}(\xi - \xi_j)|^{d+1} \indicator_{|D_j^{-1}(\xi - \xi_j)| \geq \frac{1}{2}} \leq c_d \epsilon^2 A \label{assumption2}
\end{equation}
hold for a.e. $\xi \in \R^d$, then every $f \in L^2(\R^d)$ satisfies the inequalities
\[ (1-\epsilon ) A ||f||^2 \leq \sum_{j=1}^\infty \sum_{k \in \Z^d}   \frac{\left|\ang{f,\tau_{D^{I}_j k} g_j}\right|^2}{|\det D_j|}  \leq (1 + \epsilon ) B ||f||^2. \]
Here $D^{I}_j := (D^t_j)^{-1}$ for each $j$ and $\tau_{p} g_j(x) := g_j(x-p)$.
\end{theorem}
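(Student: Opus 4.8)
The plan is to pass to the Fourier side, decompose each $\widehat g_j$ into the part of it supported on a fundamental domain of the appropriate dual lattice together with a ``Schwartz tail,'' treat the first part exactly, and show that the tail produces only $\epsilon$-small perturbations by way of \eqref{assumption2}. Fix first an $f$ with $\widehat f$ bounded and compactly supported; this is a dense class on which every sum and integral below converges absolutely and the Plancherel/Poisson manipulations are legitimate, and the general case follows by density once a uniform two-sided bound is available. Write $b_j:=D_j^I=(D_j^t)^{-1}$, so that $b_j\Z^d$ has covolume $|\det D_j|^{-1}$ and dual lattice $D_j\Z^d$; a linear change of variables together with Parseval for Fourier series gives the periodization identity
\[
\sum_{k\in\Z^d}\frac{\bigl|\langle f,\tau_{b_jk}g_j\rangle\bigr|^2}{|\det D_j|}
=\int_{\R^d/D_j\Z^d}\Bigl|\sum_{m\in\Z^d}\widehat f(\xi+D_jm)\,\overline{\widehat g_j(\xi+D_jm)}\Bigr|^2\,d\xi .
\]
Setting $u_j(\xi):=D_j^{-1}(\xi-\xi_j)$ and splitting $\widehat g_j=\widehat g_j\,\indicator_{|u_j|<1/2}+h_j$ with $h_j:=\widehat g_j\,\indicator_{|u_j|\ge1/2}$, expanding the square and summing over $j$ writes the quantity to be estimated as $S^{cc}+2\operatorname{Re}S^{ct}+S^{tt}$, the three contributions coming from core$\,\times\,$core, core$\,\times\,$tail, and tail$\,\times\,$tail.

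The core term is aliasing-free: $\{|u_j|<1/2\}=\xi_j+D_j(-\tfrac12,\tfrac12)^d$ is a fundamental domain for $D_j\Z^d$, so for a.e.\ $\xi$ only one of the translates $\xi+D_jm$ lies in $\operatorname{supp}(\widehat g_j\,\indicator_{|u_j|<1/2})$, and hence $S^{cc}=\int_{\R^d}|\widehat f|^2\sum_j|\widehat g_j|^2\indicator_{|u_j|<1/2}$. Since $\indicator_{|u_j|\ge1/2}\le2^{d+1}|u_j|^{d+1}\indicator_{|u_j|\ge1/2}$, hypothesis \eqref{assumption2} shows that this differs from $\int|\widehat f|^2\sum_j|\widehat g_j|^2$ by at most $2^{d+1}c_d\epsilon^2A\|f\|^2$; together with \eqref{assumption1} and Plancherel this gives $(1-2^{d+1}c_d\epsilon^2)A\|f\|^2\le S^{cc}\le B\|f\|^2$.

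The tail--tail term is the heart of the matter. Its diagonal contribution (the $m=m'$ terms) equals $\int|\widehat f|^2\sum_j|h_j|^2$, again $\le2^{d+1}c_d\epsilon^2A\|f\|^2$ by \eqref{assumption2}. For the off-diagonal part, after the substitution $\eta=\xi+D_jm$, $\ell=m'-m$, each summand integrates $\widehat f(\eta)\overline{h_j(\eta)}\,\overline{\widehat f(\eta+D_j\ell)}\,h_j(\eta+D_j\ell)$ over $\eta$ and is supported where $|u_j(\eta)|\ge\tfrac12$ \emph{and} $|u_j(\eta)+\ell|\ge\tfrac12$. I would estimate it by Cauchy--Schwarz in $\eta$ and then in $\ell$ with the \emph{symmetric} weight $\omega_\ell(\eta):=\bigl(|u_j(\eta)|/|u_j(\eta)+\ell|\bigr)^{d+1}$. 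On the first resulting factor the $\ell$-sum is $\sum_{\ell\ne0,\ |u_j(\eta)+\ell|\ge1/2}|u_j(\eta)+\ell|^{-(d+1)}$, which is bounded by a constant $C_d$ \emph{uniformly} in $u_j(\eta)$, precisely because the restriction $|u_j(\eta)+\ell|\ge\tfrac12$ removes the only possibly large term while the exponent $d+1$ exceeds $d$; after substituting $\zeta=\eta+D_j\ell$ the second factor is handled the same way, the constraint $|u_j(\eta)|\ge\tfrac12$ becoming $|u_j(\zeta)-\ell|\ge\tfrac12$ and again removing the dangerous term. Both factors collapse to $C_d\int|\widehat f|^2|h_j|^2|u_j|^{d+1}\indicator_{|u_j|\ge1/2}$, so each off-diagonal $R^{tt}_j$ is bounded in absolute value by that quantity; summing over $j$ and invoking \eqref{assumption2} once more gives $\sum_j|R^{tt}_j|\le C_dc_d\epsilon^2A\|f\|^2$, hence $|S^{tt}|\le(2^{d+1}+C_d)c_d\epsilon^2A\|f\|^2$.

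The cross term uses the same mechanism but asymmetrically, since the core factor carries no decay weight: writing $S^{ct}=\operatorname{Re}\sum_j\sum_{m\ne0}\int_{|u_j(\eta)|<1/2}\widehat f(\eta)\overline{\widehat g_j(\eta)}\,\overline{\widehat f(\eta+D_jm)}\,h_j(\eta+D_jm)\,d\eta$ (with $|u_j(\eta)+m|\ge\tfrac12$ automatic for $m\ne0$), apply $ab\le\tfrac{\delta}{2}a^2+\tfrac1{2\delta}b^2$ with a small parameter $\delta>0$, separating $\tfrac\delta2|\widehat f(\eta)|^2|\widehat g_j(\eta)|^2|u_j(\eta)+m|^{-(d+1)}$ from $\tfrac1{2\delta}|\widehat f(\eta+D_jm)|^2|h_j(\eta+D_jm)|^2|u_j(\eta)+m|^{d+1}$. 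The first group sums to at most $\tfrac{\delta C_d}{2}S^{cc}$, and the second, after substituting $\zeta=\eta+D_jm$ and using that the cubes $\{|u_j(\zeta)-m|<\tfrac12\}$ with $m\ne0$ tile $\{|u_j(\zeta)|\ge\tfrac12\}$, sums to at most $\tfrac{c_d\epsilon^2A}{2\delta}\|f\|^2$ by \eqref{assumption2}; thus $|S^{ct}|\le\tfrac{\delta C_d}{2}S^{cc}+\tfrac{c_d\epsilon^2A}{2\delta}\|f\|^2$. The key point is the choice $\delta=\sqrt{c_d}\,\epsilon$, which turns the first term into a small \emph{multiplicative} perturbation of the already-controlled $S^{cc}$ rather than an independent error of size $\sim\epsilon\sqrt{AB}$ --- this is what keeps the estimates from degrading when $B/A$ is large. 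Assembling the three pieces, using $(1-2^{d+1}c_d\epsilon^2)A\|f\|^2\le S^{cc}\le B\|f\|^2$, $A\le B$, $\epsilon<1$, and $\delta=\sqrt{c_d}\epsilon$, every error term is at most $\Gamma_d(\sqrt{c_d}+c_d)\,\epsilon\cdot A\|f\|^2$ below and the analogous bound with $B\|f\|^2$ above, where $\Gamma_d$ depends only on $d$ (through $C_d$); choosing the absolute constant $c_d$ small enough that this combined factor is $\le1$ yields $(1-\epsilon)A\|f\|^2\le\sum_{j,k}|\det D_j|^{-1}|\langle f,\tau_{b_jk}g_j\rangle|^2\le(1+\epsilon)B\|f\|^2$ on the dense class, and therefore for all $f\in L^2(\R^d)$. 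I expect the tail--tail off-diagonal estimate to be the genuine obstacle: one must arrange the aliasing error to reduce \emph{exactly} to the single quantity $\sum_j|h_j|^2|u_j|^{d+1}$ that \eqref{assumption2} controls, with no spurious $B/A$ factor, and this hinges on the uniform finiteness of the lattice sums $\sum_{\ell\ne0,\,|v+\ell|\ge1/2}|v+\ell|^{-(d+1)}$, i.e.\ on the exponent in \eqref{assumption2} being at least $d+1$.
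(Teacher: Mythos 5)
Your argument is correct and takes essentially the same route as the paper: the lattice periodization identity followed by a weighted AM--GM/Cauchy--Schwarz bound on the off-diagonal (aliasing) terms, with the weight chosen so that every error reduces to the quantity controlled by \eqref{assumption2} and a free parameter $\delta \sim \epsilon$ balancing the near-diagonal contribution. The paper merely packages your core/tail splitting into a single per-tile weight $w = a\,\indicator_{|\xi|<1/2} + |\xi|^{d+1}\indicator_{|\xi|\ge 1/2}$ (Lemma \ref{onescale}) and works with general $f,g \in L^2$ directly rather than through your density step; these differences are organizational, not substantive.
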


The essential step in proving Theorem \ref{frametheorem} is the following lemma. 
\begin{lemma} \label{onescale}
In any dimension $d \geq 1$, there is a constant $C_d$ such that for any tile $T = (D,\xi)$, any $\delta \in (0,1)$, and any $f,g \in L^2(\R^d)$,
\begin{equation}
\begin{split}
\Bigg|  \sum_{k \in \Z^d} & \frac{|\ang{f,\tau_{D^{I} k} g}|^2}{|\det D|} - \int_{\R^d} |\widehat{f}(\xi')|^2 |\widehat{g}(\xi')|^2 d \xi' \Bigg| \\
& \leq \delta \int_{\R^d} |\widehat{f}(\xi')|^2 |\widehat{g}(\xi')|^2 d \xi' + \frac{C_d}{\delta} \int_{\R^d} |\widehat{f}(\xi')|^2 |\widehat{g}(\xi')|^2 \omega^T(\xi') d \xi' 
\end{split} \label{smoothness}
\end{equation}
where $\omega(\xi') := \indicator_{|\xi'| \geq \frac{1}{2}} |\xi'|^{d+1}$.
\end{lemma}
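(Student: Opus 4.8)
The plan is to reduce \eqref{smoothness}, via Plancherel and a linear change of variables, to a $\xi$-independent inequality for the periodization of a single $L^1$ function, and then to estimate its off-diagonal mass by a weighted arithmetic--geometric mean inequality with a carefully chosen weight. First I would fix $T=(D,\xi)$, set $\xi_0:=D^{-1}\xi$, and put $F(\eta):=\widehat{f}(D\eta)\,\overline{\widehat{g}(D\eta)}$, which lies in $L^1(\R^d)$ by Cauchy--Schwarz. A direct Plancherel computation together with the substitution $\xi'=D\eta$ gives $\ang{f,\tau_{D^{I}k}g}=|\det D|\,\widehat{F}(-k)$, hence
\[ \sum_{k\in\Z^d}\frac{|\ang{f,\tau_{D^{I}k}g}|^2}{|\det D|}=|\det D|\sum_{k\in\Z^d}|\widehat{F}(k)|^2, \]
while the same substitution identifies $\int_{\R^d}|\widehat{f}|^2|\widehat{g}|^2$ with $|\det D|\int_{\R^d}|F|^2$ and $\int_{\R^d}|\widehat{f}|^2|\widehat{g}|^2\,\omega^T$ with $|\det D|\int_{\R^d}|F(\eta)|^2\,\omega(\eta-\xi_0)\,d\eta$. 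Dividing by $|\det D|$, the lemma becomes the assertion that
\[ \Big|\sum_{k\in\Z^d}|\widehat{F}(k)|^2-\int_{\R^d}|F|^2\Big|\le \delta\int_{\R^d}|F|^2+\frac{C_d}{\delta}\int_{\R^d}|F(\eta)|^2\,\omega(\eta-\xi_0)\,d\eta. \]
Periodizing $F$ to $\widetilde{F}(\eta):=\sum_{n\in\Z^d}F(\eta+n)\in L^1(\T^d)$, whose Fourier coefficients are the $\widehat{F}(k)$, Parseval on $\T^d$ gives $\sum_k|\widehat{F}(k)|^2=\|\widetilde{F}\|_{L^2(\T^d)}^2$, and expanding the square (the relevant double sum converging absolutely a posteriori) this equals $\int_{\R^d}|F|^2+\sum_{m\ne 0}H(m)$ with $H(m):=\int_{\R^d}F(\eta)\,\overline{F(\eta-m)}\,d\eta$; consequently the left side above is at most $S:=\sum_{m\ne 0}\int_{\R^d}|F(\eta)||F(\eta-m)|\,d\eta$. (In the degenerate case where the right side of \eqref{smoothness} is infinite the claim is vacuous; otherwise the bound on $S$ below is finite and all interchanges are justified.)

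Everything now comes down to bounding $S$. I would first record the elementary uniform bound $C^{(0)}_d:=\sup_{r\in\R^d}\sum_{m\in\Z^d}(1+|m+r|^{d+1})^{-1}<\infty$: this is exactly where the exponent $d+1$ is used, being the threshold for summability over $\Z^d$. Write $\widetilde{\omega}(x):=1+|x|^{d+1}$, $A_m:=\set{\eta\in\R^d}{|\eta-m-\xi_0|\le|\eta-\xi_0|}$, $B_m:=\R^d\setminus A_m$; for $m\ne 0$ the triangle inequality (with $|m|\ge 1$) forces $A_m\subset\set{\eta}{|\eta-\xi_0|\ge \tfrac12}$, and the change of variables $\eta\mapsto\zeta+m$, $m\mapsto-m$ shows $\sum_{m\ne 0}\int_{B_m}|F(\eta)||F(\eta-m)|\,d\eta\le\sum_{m\ne 0}\int_{A_m}|F(\eta)||F(\eta-m)|\,d\eta$, so $S\le 2\sum_{m\ne 0}\int_{A_m}|F(\eta)||F(\eta-m)|\,d\eta$. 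On $A_m$ I would apply, with a parameter $\lambda\ge 1$ to be chosen,
\[ |F(\eta)||F(\eta-m)|\le\frac{\lambda}{2}\,|F(\eta)|^2\,\frac{\widetilde{\omega}(\eta-\xi_0)}{\widetilde{\omega}(\eta-m-\xi_0)}+\frac{1}{2\lambda}\,|F(\eta-m)|^2\,\frac{\widetilde{\omega}(\eta-m-\xi_0)}{\widetilde{\omega}(\eta-\xi_0)}. \]
Summing the first family over $m\ne 0$ and interchanging, the inner sum $\sum_{m\ne 0,\,\eta\in A_m}\widetilde{\omega}(\eta-m-\xi_0)^{-1}$ is $\le C^{(0)}_d$ and vanishes unless $|\eta-\xi_0|\ge\tfrac12$, where $\widetilde{\omega}(\eta-\xi_0)\le 2^{d+2}\omega(\eta-\xi_0)$; this bounds the first family by $\lesssim_d\lambda\int_{\R^d}|F|^2\,\omega(\cdot-\xi_0)$. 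Summing the second family and substituting $\zeta=\eta-m$, the inner sum becomes $\sum_{m}\widetilde{\omega}(\zeta+m-\xi_0)^{-1}\le C^{(0)}_d$, bounding the second family by $\lesssim_d\lambda^{-1}\int_{\R^d}|F|^2\,\widetilde{\omega}(\cdot-\xi_0)\lesssim_d\lambda^{-1}\int_{\R^d}|F|^2+\lambda^{-1}\int_{\R^d}|F|^2\,\omega(\cdot-\xi_0)$; since $\lambda\ge 1$ the last term is absorbed into the first family's bound. Altogether $S\le \widetilde{C}_d\big(\lambda\int_{\R^d}|F|^2\,\omega(\cdot-\xi_0)+\lambda^{-1}\int_{\R^d}|F|^2\big)$ for some $\widetilde{C}_d\ge 1$ depending only on $d$, and taking $\lambda:=\widetilde{C}_d/\delta\ge 1$ and translating back through the first paragraph yields \eqref{smoothness} with $C_d:=\widetilde{C}_d^{\,2}$.

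The crux is the choice of weight in the arithmetic--geometric mean step. The obvious candidates fail by a critical-exponent balance in every dimension $d\ge 1$: a constant weight, or $|m|^{\pm a}$, or a power of $|\eta-\xi_0|$ alone, can each be tuned to make one of the two resulting families summable in $m$ only at the cost of making the other divergent. The ratio weight $\widetilde{\omega}(\eta-\xi_0)/\widetilde{\omega}(\eta-m-\xi_0)$ resolves this by exploiting two features at once: after the translation $\zeta=\eta-m$ the factor $\widetilde{\omega}(\eta-m-\xi_0)$ becomes the correctly centered weight $\widetilde{\omega}(\zeta-\xi_0)$ controlling the second family, while the restriction to $A_m$ keeps the $\lambda$-heavy first family away from $\set{\eta}{|\eta-\xi_0|<\tfrac12}$, where $\omega$ vanishes; this is precisely why the price paid near $\xi_0$ is the harmless $\lambda^{-1}\int|F|^2$ term rather than an uncontrollable $\lambda\int|F|^2$ term. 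The remaining ingredients are routine: the uniform summability bound for $\widetilde{\omega}^{-1}$, the elementary comparisons $\widetilde{\omega}(x)\le 2+\omega(x)$ for all $x$ and $\widetilde{\omega}(x)\lesssim_d\omega(x)$ for $|x|\ge\tfrac12$, and the change-of-variables bookkeeping relating $F$ back to $f$ and $g$.
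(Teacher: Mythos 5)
Your proof is correct and follows essentially the same route as the paper: periodization and Parseval to identify the discrete sum, reduction to the off-diagonal correlation terms, and a weighted arithmetic--geometric mean inequality with a weight comparable to $1+|x|^{d+1}$, closed by the uniform lattice-sum bound $\sup_{r}\sum_{m\in\Z^d}(1+|m+r|^{d+1})^{-1}<\infty$. The only difference is organizational: you inject $\delta$ through the balance parameter $\lambda$ together with the $A_m$/$B_m$ symmetrization, whereas the paper builds it into the weight itself via the floor $a\sim\delta$ on $\{|x|<\tfrac12\}$ --- both devices serve the same purpose of keeping the heavy term supported where $\omega$ does not vanish.
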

Before proving this lemma, let us first see how the lemma very directly implies Theorem \ref{frametheorem}. 
Let $c_d := (4C_d)^{-1}$ with $C_d$ being the constant from Lemma \ref{onescale}. The lemma can be applied for each $j$ and the results summed to conclude that
\[ \left[ (1-\delta)A - \frac{ \epsilon^2 A}{4 \delta} \right]  ||f||^2 \leq \sum_{j=1}^\infty \sum_{k \in \Z^d}   \frac{\left|\ang{f,\tau_{D^I_j k} g_j}\right|^2}{|\det D_j|}  \leq \left[ (1 + \delta) B  + \frac{\epsilon^2 A}{4 \delta} \right] ||f||^2\]
for any $\delta \in (0,1)$ and any $f \in L^2(\R^d)$ because the hypotheses \eqref{assumption1} and \eqref{assumption2} of Theorem \ref{frametheorem} imply that
\[ A ||f||^2 \leq \int_{\R^d} |\widehat{f}(\xi')|^2 \sum_{j=1}^\infty |\widehat{g}_j(\xi')|^2 d \xi' \leq B ||f||^2 \]
and
\[ \int_{\R^d} |\widehat{f}(\xi)|^2 \sum_{j=1}^\infty |\widehat{g}_j(\xi')|^2 |D_j^{-1}(\xi' - \xi_j)|^{d+1} \indicator_{|D_j^{-1}(\xi' - \xi_j)| \geq \frac{1}{2}}  d \xi' \leq c_d \epsilon^2 A ||f||^2. \]
The theorem follows by fixing $\delta := \epsilon/2$, which is allowed because $\epsilon < 1$ (here one also uses the fact that $A \leq B$ to simplify the upper frame bound).

\begin{proof}[Proof of Lemma \ref{onescale}]
It suffices to assume that the right-hand side of \eqref{smoothness} is finite.
For any $f,g \in L^2(\R^d)$, Plancherel's identity dictates that
\[ \ang{f,\tau_{D^I k} g} = \int_{\R^d} f(x) \overline{g(x-D^I k)} dx = \int \widehat{f}(\xi') \overline{ e^{-2 \pi i \xi' \cdot D^I k} \widehat{g}(\xi')} d \xi', \]
and by writing $\R^d$ as a disjoint union of sets $D ( \ell + [0,1)^d)$ for $\ell \in \Z^d$ and applying the Dominated Convergence Theorem in the usual way, it follows that
\begin{equation} \sum_{\ell \in \Z^d} \widehat{f}(\xi' - D \ell) \overline{\widehat{g}(\xi' - D \ell)} \label{bracketsum} \end{equation}
is an integrable function on $D \T^d$ which converges absolutely for a.e. $\xi'$ and that
\[  \ang{f,\tau_{D^I k} g} = \int_{D \T^d} e^{2 \pi i (D^{-1} \xi') \cdot k} \sum_{\ell \in \Z^d} \widehat{f}(\xi' - D \ell) \overline{\widehat{g}(\xi' - D \ell)} d\xi'\]
for each $k \in \Z^d$. Now by Parseval's identity,
\begin{equation} \sum_{k \in \Z^d} \frac{|\ang{f,\tau_{D^I k} g}|^2}{|\det D|} =  \int_{D \T^d} \left| \sum_{k \in \Z^d} \widehat{f}(\xi' - D k) \overline{\widehat{g}(\xi' - D k)} \right|^2 d \xi'. \label{ambigfinite} \end{equation}
Note in particular that Parseval's identity implies equality \eqref{ambigfinite} when \textit{either} side of this identity is known to be finite, so it also will trivially hold when either side is infinite. Since we may assume that $f$ and $g$ are chosen in such a way that the right-hand side of \eqref{smoothness} is finite, it follows that
\begin{equation}
\begin{split}
& \sum_{k \in \Z^d}   \frac{|\ang{f,\tau_{D^I k} g}|^2}{|\det D|} - \int_{\R^d} |\widehat{f}(\xi')|^2 |\widehat{g}(\xi')|^2 d \xi' = \\
  & \int_{D \T^d} \left[ \left| \sum_{k \in \Z^d} \widehat{f}(\xi' - D k) \overline{\widehat{g}(\xi' - D k)} \right|^2 - \sum_{k \in \Z^d} \left|\widehat{f}(\xi' - D k) \overline{\widehat{g}(\xi' - D k)} \right|^2 \right] d \xi'
 \end{split} \label{subtractdiag}
 \end{equation}
 by a similar integral periodization computation to that above (note that the subtracted quantities on both sides of \eqref{subtractdiag} are known to be finite and equal because the right-hand side of \eqref{smoothness} is assumed to be finite, so \eqref{subtractdiag} continues to make sense whether or not the quantities \eqref{ambigfinite} are known to be finite). Because $f$ and $g$ both belong to $L^2$ and \eqref{bracketsum} consequently converges absolutely for a.e. $\xi'$, 
one can write
 \begin{align*} \Bigg| \sum_{k \in \Z^d} & \widehat{f}(\xi' - D k) \overline{\widehat{g}(\xi' - D k)} \Bigg|^2 \\ & = \sum_{k \in \Z^d} \sum_{\ell \in \Z^d} \widehat{f}(\xi' - D k) \overline{\widehat{g}(\xi' - D k)} \overline{\widehat{f}(\xi' - D \ell)} {\widehat{g}(\xi' - D \ell)} 
 \end{align*}
 with absolute convergence as a sum over $(k,\ell) \in \Z^d \times \Z^d$. The second term inside the integrand on the right-hand side of \eqref{subtractdiag} is exactly equal to the sum of diagonal terms $k=\ell$, and so by the triangle inequality and \eqref{subtractdiag}, the magnitude of left-hand side of \eqref{smoothness} is bounded above by
 \begin{equation} \int_{D \T^d} \sum_{k \in \Z^d} \sum_{\ell \in \Z^d\setminus \{k\}} \left| \widehat{f}(\xi' - D k) \overline{\widehat{g}(\xi' - D k)} \overline{\widehat{f}(\xi' - D \ell)} {\widehat{g}(\xi' - D \ell)} \right| d \xi'. \label{firstupper} \end{equation}
 Now suppose that $w$ is a strictly positive measurable function on $\R^d$. The precise form of $w$ will be specified momentarily, but in any case, one may write
 \begin{align*}
  \Big| \widehat{f}(\xi' - D k) & \overline{\widehat{g}(\xi' - D k)} \overline{\widehat{f}(\xi' - D \ell)} {\widehat{g}(\xi' - D \ell)} \Big| \\
  \leq &  \frac{1}{2}  \frac{|\widehat{f}(\xi' - D k) \overline{\widehat{g}(\xi' - D k)}|^2 w(D^{-1} (\xi' - Dk - \xi))}{w(D^{-1} (\xi' - \xi) -\ell)} \\ & + \frac{1}{2}  \frac{|\widehat{f}(\xi' - D \ell) \overline{\widehat{g}(\xi' - D \ell)}|^2 w(D^{-1} (\xi' - D \ell - \xi))}{w(D^{-1} (\xi' - \xi) -k)} 
  \end{align*}
  for almost every $\xi'$
 by the standard inequality between arithmetic and geometric means. Applying this to the integrand of \eqref{firstupper} and recognizing the symmetry between $k$ and $\ell$ gives that the left-hand side of \eqref{smoothness} is bounded above by
\begin{equation}
\begin{split}
 \int_{D\T^d}  & \sum_{k \in \Z^d} \sum_{\ell \in \Z^d \setminus \{k\}}  \frac{|\widehat{f}(\xi' - D k) \widehat{g}(\xi' - D k)|^2 w(D^{-1}(\xi' - D k - \xi))}{w(D^{-1}(\xi' - \xi) -  \ell)}  d \xi' \\
 = & \int_{\R^d} |\widehat{f}(\xi')|^2 |\widehat{g}(\xi')|^2 w(D^{-1} (\xi' - \xi)) \sum_{\ell' \in \Z^d \setminus \{0\}} \frac{1}{w(D^{-1}(\xi' - \xi) - \ell')} d \xi', 
\end{split} \label{almostdone}
\end{equation}
where the equality in \eqref{almostdone} is a consequence of writing $\ell = \ell' + k$ and unperiodizing the integral.
Now take $w$ to have the form
\[ w(\xi) = a \indicator_{|\xi| < \frac{1}{2}} + |\xi|^{d+1} \indicator_{|\xi| \geq \frac{1}{2}} \]
for a positive value of $a$ which will depend on $\delta$. The key computation required to complete the proof is an upper bound for the sum
\[ \sum_{\ell' \in \Z^d \setminus \{0\}} \frac{1}{ w( \xi' - \ell')} \]
in terms of $\xi' \in \R^d$. Now
\[ \frac{1}{w(\xi)} = a^{-1}  \indicator_{|\xi| < \frac{1}{2}} + |\xi|^{-d-1} \indicator_{|\xi| \geq \frac{1}{2}}, \]
so
\begin{align*}
\sum_{\ell' \in \Z^d \setminus \{0\}} \frac{1}{w(\xi' - \ell')} & = \sum_{\ell' \in \Z^d \setminus \{0\}} \left[ a^{-1}  \indicator_{|\xi'-\ell'| < \frac{1}{2}} + |\xi'-\ell'|^{-d-1} \indicator_{|\xi'-\ell'| \geq \frac{1}{2}} \right] \\
& \leq a^{-1} \indicator_{|\xi'| > \frac{1}{2}} + \sum_{\ell' \in \Z^d} \frac{\indicator_{|\xi' - \ell'| \geq \frac{1}{2}}}{|\xi' - \ell'|^{d+1}}
\end{align*}
because $\indicator_{|\xi'-\ell'| < \frac{1}{2}}$ vanishes for all $\ell' \neq 0$ when $|\xi'| \leq \frac{1}{2}$ and otherwise is equal to $1$ for at most one $\ell'$ for each fixed $\xi'$. Now
\begin{align*}
 \sum_{\ell' \in \Z^d} \frac{\indicator_{|\xi' - \ell'| \geq \frac{1}{2}}}{|\xi' - \ell'|^{d+1}} & = \sum_{\ell' \in \Z^d}  \indicator_{|\xi' - \ell'| \geq \frac{1}{2}} \int_{\frac{1}{2}}^\infty \indicator_{|\xi'-\ell'| < s} \frac{d+1}{s^{d+2}} ds \\
\\ & = (d+1) \int_{\frac{1}{2}}^\infty \# \set{ \ell' \in \Z^d}{ \frac{1}{2} \leq |\xi'-\ell'| < s} \frac{ds}{s^{d+2}}. 
\end{align*}
Recalling that $|\cdot|$ denotes the $\ell^\infty$ norm on $\R^d$, the number of lattice points $\ell'$ inside the box $|\xi' - \ell'| < s$ never exceeds $(2s+1)^d$, which is itself less than $(4s)^d$ because $s > \frac{1}{2}$. Upon applying this inequality to the integrand immediately above, the sum in question can never exceed $2^{2d+1}(d+1)$ and therefore
\[ \sum_{\ell' \in \Z^d \setminus \{0\}} \frac{1}{w(\xi' - \ell')}  \leq a^{-1} \indicator_{|\xi'| > \frac{1}{2}} + 2^{2d+1}(d+1) \]
for every $\xi' \in \R^d$. 
Multiplying both sides by $w(\xi')$ gives that
\begin{align*}
 w(\xi') & \sum_{\ell' \in \Z^d \setminus \{0\}} \frac{1}{w(\xi' - \ell')} \\ & \leq 2^{2d+1}(d+1) a \indicator_{|\xi'| < \frac{1}{2}} + (2^{2d+1}(d+1) + a^{-1}) |\xi|^{d+1} \indicator_{|\xi'| \geq \frac{1}{2}} \\
 & \leq 2^{2d+1} (d+1) a + (2^{2d+1}(d+1) + a^{-1}) |\xi'|^{d+1} \indicator_{|\xi'| \geq \frac{1}{2}}.
 \end{align*}
 For any $\delta \in (0,1]$, fix $a := \delta 2^{-2d-1}/(d+1)$. In this case, $2^{2d+1}(d+1) \leq a^{-1}$, so
 \begin{equation} w(\xi') \sum_{\ell' \in \Z^d \setminus \{0\}} \frac{1}{w(\xi' - \ell')} \leq \delta + 2^{2d+2} (d+1) \delta^{-1} |\xi'|^{d+1} \indicator_{|\xi'| \geq \frac{1}{2}}. \label{whichineq} \end{equation}
 Applying the inequality \eqref{whichineq} with $\xi'$ replaced by  $D^{-1}(\xi' - \xi)$ to the integrand inside the right-hand side of \eqref{almostdone} completes the lemma when one takes $C_d := 2^{2d+2}(d+1)$.
\end{proof}

\section{Smooth compactly-supported functions with subexponential Fourier decay}
\label{smoothsec}
Before dealing directly with the proof of Theorem \ref{compactthm}, we first record in this section a number of basic results concerning the Fourier decay of functions which are $C^\infty$ and compactly supported. While it is widely known that such functions $\varphi$ must have Fourier transforms $\widehat{\varphi}(\xi)$ which decay faster than $C_N |\xi|^{-N}$ for any positive $N$, it is perhaps much less widely known that there exists a sharp characterization,  due to Ingham \cite{MR1574706}, of the possible Fourier decay rates exhibited by such functions $\varphi$. Proposition \ref{smoothfn}, included here for completeness and its intrinsic interest, records one direction of Ingham's characterization following essentially the same argument that appears there. For the converse direction, Ingham shows that when the integral in \eqref{wcond} is infinite,  any function $\varphi$ satisfying \eqref{ptwisebnd} must be quasi-analytic and therefore not compactly supported.
\begin{proposition}[Rapid Fourier decay of compactly-supported functions]
Let $W : \R \rightarrow [1,\infty)$ be an even function nondecreasing on $[0,\infty)$ such that $W(t) = 1$ for all $t \in [0,t_*]$ for some positive $t_*$, $t^{-N} W(t) \rightarrow \infty$ as $t \rightarrow \infty$ for all $N > 0$, and
\begin{equation} C_W := \frac{1}{t_*} + \int_0^\infty \frac{\ln W(t)}{t^2} dt < \infty. \label{wcond} \end{equation}
There exists an even, radial decreasing $C^\infty$ function $\varphi$ on $\R$ supported on the interval $[-\frac{\sqrt{e}}{\pi} C_W, \frac{\sqrt{e}}{\pi}C_W]$ such that $\varphi$ and $\widehat \varphi$ are both real and nonnegative, $\int \varphi = 1$, and \label{smoothfn}
\begin{equation} |\widehat{\varphi}(\xi)| \leq \frac{1}{W(\xi)}  \text{ for all } \xi \in \R. \label{ptwisebnd} \end{equation}
\end{proposition}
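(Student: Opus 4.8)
The plan is to follow Ingham and build $\varphi$ as an infinite convolution of rescaled indicator functions, then convolve it with itself to force the Fourier transform to be nonnegative. Write $\mu := \ln W$; thus $\mu$ is even, nondecreasing on $[0,\infty)$, vanishes on $[0,t_*]$, eventually dominates $N\ln t$ for every $N$, and satisfies $\int_0^\infty \mu(t)\,t^{-2}\,dt = C_W - t_*^{-1}$. For each integer $m\ge 1$ set $\sigma_m := \inf\{t>0 : \mu(t)\ge m\}$ and set $\sigma_0 := t_*$, so that $t_* = \sigma_0\le\sigma_1\le\sigma_2\le\cdots\to\infty$. Define $a_m := \dfrac{\sqrt e}{2\pi\sigma_m}$ for $m\ge 0$, let $\psi := \ast_{m\ge 0}\bigl(\tfrac{1}{2a_m}\one_{[-a_m,a_m]}\bigr)$ be the corresponding infinite convolution, and put $\varphi := \psi\ast\psi$, so that
\[ \widehat\varphi(\xi)\ =\ \widehat\psi(\xi)^2\ =\ \prod_{m\ge 0}\left(\frac{\sin(\sqrt e\,\xi/\sigma_m)}{\sqrt e\,\xi/\sigma_m}\right)^{\!2}. \]
Granting $\sum_m a_m<\infty$ (verified below), $\psi$ is a compactly supported probability measure; since each partial convolution is a nonnegative, even, symmetric-decreasing probability density supported in $[-\sum_{m\le N}a_m,\sum_{m\le N}a_m]$, and the convolution of two symmetric-decreasing $L^1$ functions on $\R$ is again symmetric-decreasing, these properties pass to $\psi$ and hence to $\varphi$. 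Therefore $\varphi$ is real, even, nonnegative, radial decreasing, has $\widehat\varphi$ real and nonnegative, has $\int\varphi = \widehat\varphi(0)=1$, and is supported in $[-2\sum_m a_m,\,2\sum_m a_m]$.

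It remains to establish the support bound and \eqref{ptwisebnd}. For the former, the layer-cake formula gives
\[ \sum_{m\ge 1}\frac{1}{\sigma_m}\ =\ \int_{t_*}^\infty \frac{\#\{m\ge 1 : \sigma_m < t\}}{t^2}\,dt\ \le\ \int_{t_*}^\infty \frac{\mu(t)}{t^2}\,dt, \]
since $\sigma_m<t$ forces $\mu(t)\ge m$; hence $2\sum_{m\ge 0}a_m = \dfrac{\sqrt e}{\pi}\bigl(t_*^{-1}+\sum_{m\ge 1}\sigma_m^{-1}\bigr)\le\dfrac{\sqrt e}{\pi}C_W$, which is finite and is exactly the length of the asserted support interval. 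For \eqref{ptwisebnd}, the elementary inequality $\tfrac{\sin^2 t}{t^2}\le\min\{1,t^{-2}\}$ gives $-\ln\tfrac{\sin^2 t}{t^2}\ge 2(\ln|t|)^+$, which is $\ge 1$ as soon as $|t|\ge\sqrt e$. Consequently, for $\xi\neq 0$,
\[ -\ln\widehat\varphi(\xi)\ \ge\ \sum_{m\ge 0} 2\bigl(\ln(\sqrt e\,|\xi|/\sigma_m)\bigr)^+\ \ge\ \#\{m\ge 0 : \sigma_m\le |\xi|\}, \]
and if $|\xi|\ge t_*$ this count equals $1 + \#\{m\ge 1 : \sigma_m\le|\xi|\}\ge 1 + \lfloor\mu(|\xi|)\rfloor\ge\mu(|\xi|) = \ln W(\xi)$, so $\widehat\varphi(\xi)\le 1/W(\xi)$; for $|\xi|<t_*$ one has $W(\xi)=1$ and the inequality is trivial. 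Finally, since $W$ eventually exceeds every power of $|\xi|$, the bound $\widehat\varphi\le 1/W$ forces $\widehat\varphi$ to decay faster than every polynomial, so $\widehat\varphi\in L^1$ and $\varphi=(\widehat\varphi)^\vee\in C^\infty$ by differentiation under the integral sign.

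The routine parts — convergence of the infinite convolution, preservation of symmetric-decreasing-ness under convolution, the layer-cake identity, and the polynomial-decay-implies-smoothness step — are standard; the one genuinely delicate point, and what I expect to be the crux, is the choice of the scales $\sigma_m$. Two features of that choice do all the work. First, the multiplier $\sqrt e$ is calibrated so that each factor $(\sin t/t)^2$ contributes at least one unit of $-\log$ decay once $|t|\ge\sqrt e$: placing the $m$-th reciprocal scale at $\sqrt e/\sigma_m$ (rather than $1/\sigma_m$) therefore guarantees that at frequency $|\xi|$ the $\lfloor\mu(|\xi|)\rfloor$ ``active'' factors already supply $\mu(|\xi|)$ worth of decay, and $\sqrt e$ is the smallest multiplier with this property, hence yields the smallest support. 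Second, the extra factor indexed by $m=0$, at reciprocal scale $\sqrt e/t_*$, is what covers the transition band $\{0<\mu(\xi)<1\}\subset(t_*,\sigma_1)$, where no main factor is active yet but $\mu(\xi)$ may be arbitrarily close to $1$; this factor is active for every $|\xi|\ge t_*$ and supplies exactly that missing unit of decay, at the cost of precisely the $t_*^{-1}$ term in $C_W$. With those two choices in hand the argument is essentially Ingham's.
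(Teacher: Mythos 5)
Your proposal is correct and is essentially the paper's own argument: the same scales (your $\sigma_m$ are the paper's $t_j$, thresholds of $\ln W$ together with the extra factor at $t_*$), the same constant $\sqrt e/\pi$, the same infinite convolution of boxes with each scale used twice so that $\widehat\varphi=\prod_m \bigl(\tfrac{\sin(\sqrt e\,\xi/\sigma_m)}{\sqrt e\,\xi/\sigma_m}\bigr)^2$, the same layer-cake bound $\sum_m\sigma_m^{-1}\le C_W$ for the support, and the same counting of ``active'' sinc factors (each contributing a factor $e^{-1}$ once $|\xi|\ge\sigma_m$) to obtain $\widehat\varphi\le 1/W$ and then smoothness from superpolynomial decay. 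The only difference is organizational: the paper justifies the limit by monotone/dominated convergence of the partial products and uniform convergence of the partial convolutions, where you invoke the (equally standard) convergence of the infinite convolution and unimodality under convolution.
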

\begin{proof}
For each nonnegative integer $j$, let $t_j$ be the infimum of all $t \in[t_*,\infty)$ such that $\ln W(t) \geq j$. By definition of $t_j$, if $t_j < t < t_{j+1}$ for some $j \geq 0$, then $j \leq \ln W(t) < j+1$. It follows that for each positive $N$,
\[ \sum_{j=0}^N j \left(\frac{1}{t_j} - \frac{1}{t_{j+1}} \right) \leq \sum_{j=0}^N \int_{t_j}^{t_{j+1}} \frac{\ln W(t)}{t^2} dt \leq \int_0^{t_{N+1}} \frac{\ln W(t)}{t^2} dt  \]
and
\[ \frac{N+1}{t_{N+1}} \leq \int_{t_{N+1}}^\infty \frac{\ln W(t)}{t^2} dt. \]
Summing the two inequalities, observing that $t_0 = t_*$, and letting $N \rightarrow \infty$ gives
\[ \sum_{j=0}^{\infty} \frac{1}{t_j} \leq \frac{1}{t_*} + \int_0^\infty \frac{\ln W(t)}{t^2} dt = C_W. \] 

For some $c > 0$ to be specified shortly, let $\eta_j(x) := c^{-1} t_j \indicator_{[-c t_j^{-1} /2,c t_j^{-1} /2]}(x)$ for each $j \geq 0$. Let $\varphi_M$ equal the repeated convolution $\eta_0 * \eta_0 * \eta_1 * \eta_1 * \cdots * \eta_M * \eta_M$, and note that each $\varphi_M$ is an even, nonnegative function which is radially decreasing, has integral $1$, and is supported on the interval
\begin{equation*} \left[ - \sum_{j=0}^M \frac{c}{t_j}, \sum_{j=1}^M \frac{c}{t_j} \right] \subset \left[- c C_W, c C_W \right].  \end{equation*}
The Fourier transform of $\varphi_M$ is a product:
\begin{equation} \widehat{\varphi_M}(\xi) = \prod_{j=0}^M \left[ \frac{\sin c t_j^{-1} \pi \xi}{  c t_j^{-1} \pi \xi} \right]^2. \label{partialproducts} \end{equation} 
Since $|\sin t |/ |t| \leq 1$, these products are nonnegative and nonincreasing as a function of $M$ for each fixed $\xi$, so there is pointwise convergence of $\widehat{\varphi_M}(\xi)$ as $M \rightarrow \infty$ for each $\xi$. Because $(\sin \pi  \xi)^2/(\pi \xi)^2 \in L^1(\R)$, the partial products also converge in $L^1(\R)$ by dominated convergence. Then by Hausdorff-Young, the functions $\varphi_M$ necessarily converge uniformly as $M \rightarrow \infty$. Let $\varphi$ be the uniform limit of the functions $\varphi_M$. Uniform convergence implies that $\varphi$ is even, nonnegative, radially-decreasing, and supported on the interval $\left[- c C_W, c C_W \right]$. Moreover, the integral of $\varphi$ must equal $1$ because uniform convergence of these compactly-supported functions implies convergence in $L^1(\R)$. Finally,
\[ \int_{\R} e^{-2 \pi i x \xi} \varphi(x) dx = \lim_{M \rightarrow \infty} \int_{\R} e^{-2 \pi i x \xi} \varphi_M(x) dx = \prod_{j=0}^\infty \left| \frac{\sin c t_j^{-1} \pi \xi}{c t_j^{-1} \pi \xi} \right|^2 \]
for each $\xi \in \R$ by virtue of convergence of the sequence $\{\varphi_M\}$ in $L^1$ as well.
Now fix $c := e^{1/2} \pi^{-1}$. For any given $\xi \in \R$, if $|\xi| \geq t_{j_0}$, then one  must have
\[ \left| \frac{\sin c t_{j_0}^{-1} \pi \xi}{ c t_{j_0}^{-1} \pi \xi}\right|^2 \leq e^{-1}. \]
Taking a product over $j \leq j_0$ gives that
\[ |\widehat{\varphi}(\xi)| = \prod_{j=0}^\infty \left| \frac{\sin c t_j^{-1} \pi \xi}{c t_j^{-1} \pi \xi}\right|^2 \leq \prod_{j = 0}^{j_0} \left| \frac{\sin c t_j^{-1} \pi \xi}{c t_j^{-1} \pi \xi}\right|^2 \leq  \prod_{j = 0}^{j_0}  e^{-1} = e^{-j_0 - 1} \]
for all $|\xi| \geq t_{j_0}$. For each fixed $\xi \in \R \setminus [-t_*,t_*]$, if $j_0$ is the maximal index such that $|\xi| \geq t_{j_0}$, then $|\xi| < t_{j_0+1}$, meaning that $\ln W(\xi) < j_0 + 1$ (otherwise the definition of $t_{j_0+1}$ would require that $t_{j_0+1} \leq \ln W(|\xi|)$ and thus $t_{j_0+1} \leq |\xi|$). Therefore $|\xi| \geq t_*$ implies that $|\widehat{\varphi}(\xi)| \leq (W(\xi))^{-1}$. The same inequality holds when $|\xi| \leq t_*$ as well simply because the $L^1$-norm of $\varphi$ is one, so $|\widehat{\varphi}(\xi)| \leq 1 = 1 / W(\xi)$. Finally, note that $\varphi$ is $C^\infty$ because its Fourier transform decays faster than $|\xi|^{-N}$ for any $N > 0$.
\end{proof}

The utility of Proposition \ref{smoothfn} comes primarily through the following corollary, which establishes that one can always find nice $C^\infty$ functions of compact support $\eta$ whose Fourier transform is close to the indicator function $\indicator_{[-(1-\theta)/2,(1-\theta)/2]}$ with subexponential Schwartz tails.
\begin{corollary}[Compactly-supported approximations of tile set multipliers]
Let $W$ be as in the previous proposition. There are constants $C$ and $C'$ depending only on $W$ such that for any $\epsilon, \theta \in (0,1)$ and any dimension $d \geq 1$, there is a a real, $C^\infty$ function $\eta$ supported in $[-C \theta^{-1} \epsilon^{-1}, C \theta^{-1} \epsilon^{-1}]^d$ such that $\widehat{\eta}$ is real, nonnegative, bounded above by $1$, even in each coordinate, invariant under permutation of coordinates, and satisfies \label{almostbl}
\begin{equation} \left( 1 - \frac{C'}{W(\epsilon^{-1})} \right)^d \leq \widehat{\eta}(\xi) \text{ for all } |\xi| \leq \frac{1-\theta}{2} \label{almostcharfn} \end{equation}
and
\begin{equation} \widehat{\eta}(\xi) \leq \frac{C'}{W(\epsilon^{-1} \xi )} \text{ for all } |\xi| \geq \frac{1}{2}. \label{schwartztail} \end{equation}
\end{corollary}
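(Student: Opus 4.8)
The plan is to build the $d$-dimensional window $\eta$ as a tensor product of one-dimensional windows coming from Proposition \ref{smoothfn}, after first renormalizing to account for the support constraint and the approximate-indicator requirement. First I would apply Proposition \ref{smoothfn} with this $W$ to obtain an even, radial-decreasing $\varphi \in C^\infty(\R)$ with $\varphi, \widehat\varphi \geq 0$, $\int\varphi = 1$, support in $[-\tfrac{\sqrt e}{\pi}C_W, \tfrac{\sqrt e}{\pi}C_W]$, and $|\widehat\varphi(\xi)| \leq 1/W(\xi)$; note $\widehat\varphi(0) = \int\varphi = 1$ and $\widehat\varphi \leq 1$ everywhere. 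The idea for a single coordinate is to take a dilate $\varphi_\lambda(x) = \lambda\varphi(\lambda x)$ so that $\widehat{\varphi_\lambda}(\xi) = \widehat\varphi(\xi/\lambda)$, choosing $\lambda \sim \theta\epsilon$: the dilation by $\lambda^{-1}$ on the Fourier side means $\widehat{\varphi_\lambda}$ is close to $1$ on an interval of length $\sim\theta$ around the origin (using that $\widehat\varphi$ is continuous with $\widehat\varphi(0)=1$ and, more quantitatively, that $1 - \widehat\varphi(\eta)$ is small when $|\eta|$ is small — here I would need a quantitative modulus-of-continuity estimate near $0$, which I will address below), while the tail bound $\widehat{\varphi_\lambda}(\xi) \leq 1/W(\xi/\lambda) \leq 1/W(c\,\epsilon^{-1}\xi)$ for $|\xi| \geq \tfrac12$ gives exactly \eqref{schwartztail} up to adjusting constants. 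The support of $\varphi_\lambda$ is then $[-\tfrac{\sqrt e}{\pi}C_W \lambda^{-1}, \cdot] \subset [-C\theta^{-1}\epsilon^{-1}, C\theta^{-1}\epsilon^{-1}]$ as required.

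The one genuine subtlety is obtaining the lower bound \eqref{almostcharfn} with the explicit form $1 - C'/W(\epsilon^{-1})$ rather than merely "$\widehat\eta$ close to $1$." For this I would not use $\varphi$ directly but rather a self-convolution trick or, more efficiently, exploit the structure of the infinite product \eqref{partialproducts}. Specifically, rerun the construction of Proposition \ref{smoothfn} but with the $t_j$ all uniformly rescaled: if $\widehat\varphi(\xi) = \prod_{j\geq 0}\bigl[\tfrac{\sin c t_j^{-1}\pi\xi}{c t_j^{-1}\pi\xi}\bigr]^2$, then on $|\xi| \leq \tfrac{1-\theta}{2}\cdot(\text{something})$ each factor is at least $1 - (c t_j^{-1}\pi\xi)^2/3 \geq 1 - C(\xi/t_j)^2$, and since $\sum_j t_j^{-1} \leq C_W < \infty$ one also has $\sum_j t_j^{-2} \leq C_W^2$, so the product is at least $1 - C C_W^2 \xi^2$. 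After the $\lambda$-dilation this becomes a bound of the form $1 - C'(\lambda^{-1}\xi)^{-2}$... which is the wrong direction. The correct route is instead: dilate so that $t_0$ (hence all $t_j$) are large — i.e., take $\widehat{\eta_0}(\xi) := \widehat\varphi(\mu\xi)$ with $\mu$ small, so that on $|\xi| \leq \tfrac12$ we get $\widehat{\eta_0}(\xi) \geq \prod_j (1 - C(\mu\xi/t_j)^2) \geq 1 - C\mu^2 C_W^2$, and we want this $\geq 1 - C'/W(\epsilon^{-1})$, forcing $\mu^2 \sim 1/W(\epsilon^{-1})$; but then the tail decays only like $1/W(\mu\xi)$ which is far too slow. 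The resolution, and the step I expect to be the main obstacle, is that one cannot get both bounds from a single dilate — instead one takes a product $\widehat\eta = \widehat{\varphi}(\cdot/\lambda_1)\cdot\widehat{\varphi}(\cdot/\lambda_2)\cdots$ (finitely many factors, or equivalently a convolution of dilates) where one factor handles the tail with $\lambda \sim \theta\epsilon$ and is $\equiv 1$ on $[-\tfrac12,\tfrac12]$-scale only after...

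Rather than belabor this, the clean approach is: take $W$, set $\tilde W(t) := W(\max\{1, t - t_*\})$ or similar to gain a flat region, apply Proposition \ref{smoothfn} to get $\varphi$ with $\widehat\varphi = 1$ on a genuine interval $[-\delta_0,\delta_0]$ (the hypothesis $W(t) = 1$ on $[0,t_*]$ combined with the structure of the $t_j$'s — since $t_0 = t_*$, the first several factors in the product are $1$ for $|\xi|$ small, but they are never identically $1$... so in fact $\widehat\varphi$ is NOT identically $1$ near $0$). Hence the honest statement is: $1 - \widehat\varphi(\xi)$ is controlled by $\sum_j (\xi/t_j)^2 \lesssim \xi^2 C_W^2$ for $|\xi| \leq t_*$, so $\widehat\varphi(\xi) \geq 1 - C C_W^2 \xi^2$ there. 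Now dilate: $\widehat{\eta_0}(\xi) := \widehat\varphi(\lambda^{-1}\xi)$ with $\lambda = c_0\,\theta\epsilon$ chosen so that $\lambda^{-1}\cdot\tfrac12 \leq t_*$, i.e. the relevant range $|\xi| \leq \tfrac{1-\theta}{2}$ maps into $[-t_*,t_*]$ IF $\theta\epsilon$ is bounded below — which it need not be. So I would instead just accept a one-dimensional $\eta_0$ satisfying $\widehat{\eta_0}(\xi) \geq 1 - C'/W(\epsilon^{-1})$ for $|\xi| \leq \tfrac{1-\theta}{2}$ by the following: pick $\lambda = \theta$ (independent of $\epsilon$!) for the "approximation" factor, giving $\widehat{\varphi}(\theta^{-1}\xi) \geq 1 - C\theta^{-2}C_W^2\xi^2 \cdot\text{(no)}$...

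Given the difficulty of getting the exact form by elementary product manipulation, the step I flag as the crux is the \emph{quantitative near-origin estimate}: one must show $\widehat\varphi(\xi) \geq 1 - c_W|\xi|$ (or $|\xi|^2$) for $|\xi|$ up to scale comparable to the flat region, then take $\widehat\eta(\xi_1,\dots,\xi_d) = \prod_{i=1}^d \widehat{\varphi}_{\lambda}(\xi_i)$ with $\lambda \sim \theta\epsilon$, so that for $|\xi| \leq \tfrac{1-\theta}{2}$ each coordinate satisfies $|\xi_i/\lambda| \lesssim \epsilon^{-1}\theta^{-1}\theta = \epsilon^{-1}$, landing in a region where $\widehat\varphi(\xi_i/\lambda) \geq 1 - C'/W(\epsilon^{-1})$ — this is where one uses that $W$ nondecreasing lets us compare $1/W$ at scale $\epsilon^{-1}$ to $1/W$ at the slightly smaller scale $(1-\theta)\epsilon^{-1}/(2\theta)\cdot$, and one absorbs the mismatch into $C'$. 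Taking the $d$-fold product then gives \eqref{almostcharfn} via $(1-x)^d$ and \eqref{schwartztail} via the fact that if $|\xi| \geq \tfrac12$ then $|\xi_i| \geq \tfrac12$ for some $i$, so $\widehat\eta(\xi) \leq \widehat{\varphi}_\lambda(\xi_i) \leq 1/W(\lambda^{-1}\xi_i) \leq C'/W(\epsilon^{-1}\xi)$, using monotonicity of $W$ and $\lambda^{-1} \gtrsim \epsilon^{-1}$ together with $|\xi_i| \geq |\xi|/d$ folded into $C'$. Setting $\eta := $ the inverse Fourier transform gives the real, $C^\infty$, compactly supported, coordinate-even, permutation-invariant function claimed, with support bound following from $\operatorname{supp}\varphi_\lambda \subset [-\tfrac{\sqrt e}{\pi}C_W\lambda^{-1}, \cdot] \subset [-C\theta^{-1}\epsilon^{-1}, C\theta^{-1}\epsilon^{-1}]$.
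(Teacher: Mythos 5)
Your reduction to $d=1$ by tensoring, and your treatment of the tail bound and of the support/smoothness bookkeeping, are fine and match the paper. The genuine gap is in the one-dimensional construction: your final version takes a single dilate $\widehat{\varphi}(\xi/\lambda)$ with $\lambda\sim\theta\epsilon$ and asserts that for $|\xi|\le\frac{1-\theta}{2}$ the argument $\xi/\lambda$ (of size up to $\sim\theta^{-1}\epsilon^{-1}$, not $\epsilon^{-1}$ --- the cancellation $\epsilon^{-1}\theta^{-1}\theta$ is spurious since $(1-\theta)/2$ is not of order $\theta$) lands ``in a region where $\widehat{\varphi}(\xi_i/\lambda)\ge 1-C'/W(\epsilon^{-1})$.'' Proposition \ref{smoothfn} gives exactly the opposite there: $\widehat{\varphi}\le 1/W$, so at arguments of size $\sim\epsilon^{-1}$ the profile is close to $0$, not to $1$. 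The only near-origin lower bound available from the proposition's conclusions ($\varphi\ge 0$, $\int\varphi=1$, compact support, hence $\widehat{\varphi}(\xi)\ge 1-C\xi^2$) is useful only for $|\xi|\lesssim W(\epsilon^{-1})^{-1/2}$, vastly smaller than what you need; and, as you yourself observed midway through, no single dilate of a fixed profile can be $1-O(1/W(\epsilon^{-1}))$ out to $|\xi|=\frac{1-\theta}{2}$ while decaying like $1/W(\epsilon^{-1}\xi)$ for $|\xi|\ge\frac12$, uniformly in small $\epsilon,\theta$. So the flagged ``crux'' is not a technical estimate to be supplied later; it is false as stated, and the argument does not close.

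The missing idea, which is how the paper proceeds, is to build $\widehat{\eta}$ in one dimension as a \emph{mollified indicator}: set
\begin{equation*}
\widehat{\eta}(\xi):=\int_{|\xi'|\le\frac12-\frac{\theta}{6}}\Bigl(\tfrac{\epsilon\theta}{3}\Bigr)^{-1}\kappa\,\bigl|\widehat{\varphi}\bigl(3\theta^{-1}\epsilon^{-1}(\xi-\xi')\bigr)\bigr|^{2}\,d\xi',\qquad \kappa:=\|\varphi\|_{L^2}^{-2},
\end{equation*}
i.e.\ convolve the indicator of a slightly shrunk interval with an $L^1$-normalized bump of frequency width $\sim\epsilon\theta$ made from $|\widehat{\varphi}|^2$. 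Then $0\le\widehat{\eta}\le1$ is automatic; for $|\xi|\le\frac{1-\theta}{2}$ the deficit $1-\widehat{\eta}(\xi)$ is just the mass of the bump at distance $\ge\theta/3$, which after rescaling is $\int_{|\xi'|\ge\epsilon^{-1}}\kappa|\widehat{\varphi}|^2\le C'/W(\epsilon^{-1})$; and for $|\xi|\ge\frac12$ one has $|\xi-\xi'|\ge\theta|\xi|/3$ on the domain of integration, giving the tail bound $C'/W(\epsilon^{-1}\xi)$. On the physical side $\eta$ is a sinc function times a dilate of $\varphi*\varphi$, hence real, even, $C^\infty$, and supported in $[-C\theta^{-1}\epsilon^{-1},C\theta^{-1}\epsilon^{-1}]$, and tensoring then yields the $d$-dimensional statement exactly as you proposed. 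Some device of this kind (mollifying an indicator, or at least convolving two objects so that the plateau and the tail come from different factors) is indispensable; a single dilate of $\varphi$ cannot do both jobs.
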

\begin{proof}
If $d > 1$, it suffices to fix $\eta(\xi)$ as the product $\eta(\xi_1)\cdots \eta(\xi_d)$ for the $\eta$ constructed in the $d=1$ case. It therefore suffices to assume that $d=1$.

For $\varphi$ as given by Proposition \ref{smoothfn}, let $\kappa := ||\varphi||_{L^2}^{-2}$ and consider the function
\begin{align*}
\widehat{\eta}(\xi) := \int_{|\xi'| \leq \frac{1}{2}-\frac{\theta}{6}} \left(\frac{\epsilon \theta}{3}\right)^{-1} \kappa |\widehat{\varphi} (3 \theta^{-1} \epsilon^{-1} (\xi - \xi'))|^2 d \xi'.
\end{align*}
This $\widehat \eta$ is exactly a convolution with a dilated version of $\kappa |\widehat{\varphi}|^2$ and the indicator function of the interval $|\xi'| \leq \frac{1}{2} - \frac{\theta}{6}$. As such, $\eta$ will necessarily be a pointwise product of the inverse Fourier transform of $\indicator_{|\xi'| \leq 1/2-\theta/6}$, which is a sinc function, and a dilated version of $\varphi$ convolved with itself, which will be a $C^\infty$ function supported in the interval $[-6 \theta^{-1} \epsilon^{-1} \frac{\sqrt{e}}{\pi}C_W,6 \theta^{-1} \epsilon^{-1} \frac{\sqrt{e}}{\pi}C_W]$. Thus, $\eta$ is real, even, $C^\infty$, and supported on this same interval. The Fourier transform $\widehat{\eta}$ is everywhere nonnegative and bounded above by $1$ by virtue of the choice of $\kappa$ and the fact that the indicator function $\indicator_{|\xi'| \leq 1/2 - \theta/6}$ is simply bounded above by $1$.

Now when $|\xi| \geq 1/2$ and $|\xi'| \leq \frac{1}{2} - \frac{\theta}{6}$, it follows that
\[ |\xi - \xi'| \geq |\xi| - |\xi'| \geq |\xi| - \left(\frac{1}{2} - \frac{\theta}{6} \right) \geq |\xi| - |\xi| \left( 1 - \frac{\theta}{3} \right) \geq \frac{|\xi| \theta}{3}. \]
A change of variables gives that
\begin{align*} \widehat{\eta}(\xi) & \leq \int_{|\xi - \xi'| \geq \frac{\theta |\xi|}{3}}  \left(\frac{\epsilon \theta}{3}\right)^{-1}\kappa \left| \widehat{\varphi} (3 \theta^{-1} \epsilon^{-1} (\xi - \xi')) \right|^2 d \xi' \\ &  \leq  \int_{|\xi'| \geq \epsilon^{-1} |\xi|}  \kappa \left| \widehat{\varphi} (\xi') \right|^2 d \xi' \leq \frac{\int \kappa \widehat{\varphi}(\xi') d \xi}{W(\epsilon^{-1} \xi)}  = \frac{C'}{ W(\epsilon^{-1} \xi)} \end{align*}
for some $C'$ depending only on $W$.
Similarly, since
\begin{equation} 1 - \widehat{\eta}(\xi) = \int_{|\xi'| > \frac{1}{2}-\frac{\theta}{6}} \left(\frac{\epsilon \theta}{3}\right)^{-1} \kappa |\widehat{\varphi} (3 \theta^{-1} \epsilon^{-1} (\xi - \xi'))|^2 d \xi'\label{reverseint} \end{equation}
(because the integral of $\left(\frac{\epsilon \theta}{3}\right)^{-1} \kappa |\widehat{\varphi} (3 \theta^{-1} \epsilon^{-1} (\xi - \xi'))|^2$ with respect to $\xi'$ is always $1$),
it follows by the same reasoning that when $|\xi| < \frac{1}{2} ( 1 - \theta)$ and $|\xi'| > \frac{1}{2} - \frac{\theta}{6}$,  $|\xi' - \xi| \geq \frac{\theta}{3}$ on the support of the integral on the right-hand side of \eqref{reverseint} and consequently
\[ 1 - \widehat{\eta}(\xi)  \leq \int_{|\xi'| \geq \epsilon^{-1}}  \kappa \left| \widehat{\varphi_0} (\xi') \right|^2 d \xi' \leq \frac{C'}{W(\epsilon^{-1})}. \]
This completes the proof of the corollary.
\end{proof}

\section{Proof of Theorem \ref{compactthm}}
\label{compactsec}
\begin{proof}[Proof of Theorem \ref{compactthm}]
The proof of Theorem \ref{compactthm} follows rather immediately from Theorem \ref{frametheorem} and the constructions of the previous section.
Recall the definition \eqref{dilation2} of dilations $\widehat{\psi}^T$, and let $\psi$ be the convolution of $g$ with $\eta$ as given by Corollary \ref{almostbl} for some $W$ and $\epsilon$ to be chosen later (with $\theta$ taken to coincide with its value from the statement of Theorem \ref{compactthm}). With this identification, $\widehat{\psi}^T(\xi) = \widehat{g}^T(\xi) \widehat{\eta}^T(\xi)$ for each $\xi \in \R^d$ and each $T \in \tilefam$. Moreover $\psi$ will necessarily be $C^\infty$ and compactly supported because it is the convolution of the compactly supported functions $g$ and $\eta$ and $\eta$ is smooth. First observe that
\[ \sum_{T \in \tilefam} |\widehat{\psi}^T(\xi)|^2 = \sum_{T \in \tilefam} |\widehat{g}^T(\xi)|^2 |\widehat{\eta}^{T} (\xi)|^2 \leq  \sum_{j=1}^\infty |\widehat{g}^T(\xi)|^2 \leq B\]
for almost every $\xi \in \R^d$ by virtue of the hypothesis \eqref{chyp}. Likewise by \eqref{almostcharfn},
\begin{align*} \sum_{T \in \tilefam} |\widehat{\psi}^T(\xi)|^2 & = \sum_{T \in \tilefam} |\widehat{g}^T(\xi)|^2 |\widehat{\eta}^{T} (\xi)|^2 \\ & \geq \left( 1 - \frac{C'}{W(\epsilon^{-1})} \right)^{2d} \sum_{T \in \tilefam} |\widehat{g}(\xi)|^2 \one_{(1-\theta)T}(\xi) \geq \left( 1 - \frac{C'}{W(\epsilon^{-1})} \right)^{2d} A \end{align*}
for almost every $\xi \in \R^d$. Thus
\begin{equation} \left( 1 - \frac{C'}{W(\epsilon^{-1})} \right)^{2d} A \leq \sum_{T \in \tilefam} |\widehat{\psi}^T(\xi)|^2 \leq B \label{framebounds} \end{equation}
for almost every $\xi \in \R^d$. This is exactly the assumption \eqref{assumption1} of Theorem \ref{frametheorem} with the $\widehat{g}_j$'s taken to be equal to $\widehat{\psi}^T$ for each $T \in \tilefam$. Since $W(\epsilon^{-1})$ tends to infinity as $\epsilon \rightarrow 0^+$, the constant on the left-hand side of \eqref{framebounds} can be made as close to $A$ as desired for suitably small $\epsilon$ depending only on $W$, $\theta$, $d$, and $A$.

Next consider the analogue of \eqref{assumption2}. If $T$ is the tile $(D,\xi)$, then 
\begin{align*}
\sup_{\xi'} & |D^{-1} (\xi' - \xi)|^{d+1} \indicator_{ |D^{-1} (\xi' - \xi)| \geq \frac{1}{2}} |\widehat{\eta}^{T} (\xi')|^2   = \sup_{|\xi'| \geq \frac{1}{2}} |\xi'|^{d+1}  |\widehat{\eta}(\xi')|^2 \\ & \leq \sup_{t \geq \frac{1}{2}} \frac{(C')^2 t^{d+1}}{(W (\epsilon^{-1} t ))^2} =  \epsilon^{d+1} \sup_{t \geq \frac{1}{2}} \frac{(C')^2 (\epsilon^{-1} t)^{d+1}}{(W (\epsilon^{-1} t ))^2} \leq \epsilon^{d+1} \sup_{t \geq 0} \frac{(C')^2 t^{d+1}}{(W (t ))^2}, \end{align*}
which means that when $\omega(\xi') := |\xi'|^{d+1} \indicator_{|\xi'| \geq \frac{1}{2}}$, 
\begin{align*} \sum_{T \in \tilefam} |\widehat{\psi}^T(\xi')|^2 \omega^T(\xi) & \leq \left( \sum_{T \in \tilefam} |\widehat{g}^T(\xi')|^2 \right) \epsilon^{d+1} \sup_{t \geq 0} \frac{(C')^2 t^{d+1}}{(W (t ))^2}  \\ & \leq B (C')^2 \epsilon^{d+1} \sup_{t \geq 0} \frac{ t^{d+1}}{(W (t ))^2}. \end{align*}
In particular, for any fixed $W$, this right-hand side can be made as small as desired by choosing a sufficiently small $\epsilon$ (depending only on $A$, $B$ and $d$, and $\theta$), if, for example, $W(t) := \max\{1,e^{\sqrt{|t|}-1}\}$. Thus, Theorem \ref{compactthm} necessarily follows from Theorem \ref{frametheorem}.
\end{proof}

\section{Geometry of tile space and admissible tilings}
\label{geomsec}
This section lays the groundwork for the proof of Theorem \ref{maintheorem} by establishing some basic facts about tiles in $\R^d$. The key result of this section is Proposition \ref{overlapcount}, which establishes that, under the assumption that a set of tiles $\mathcal{T}_*$ is an admissible tiling, almost every point $\xi \in \R^d$ has the property that the number of tiles $T \in \mathcal{T}_*$ such that the tile set $\ts{t T}$ contains $\xi$ grows at most subexponentially as a function of $t$. This is the key observation which is necessary to establish that there exists a \textit{continuous} frame with $C^\infty$ window of compact support which is adapted to $\mathcal{T}_*$.

\begin{proposition}[Tile sets and symmetries]
\label{pretileprop} Suppose that $\M_1, \M_2 \in \GL_d(\R)$. The following are equivalent. 
\begin{enumerate}
\item $|\M_1^{-1} \M_2| = |\M_2^{-1} \M_1| = 1$ (where, as before, $|\cdot|$ is the $\ell^\infty$ operator norm).
\item $\M_2 = \M_1 P$ for some matrix $P$ for which there exists a permutation $\sigma$ of $\{1,\ldots,d\}$ such that $|P_{ij}| = 1$ when $i = \sigma(j)$ and $P_{ij} = 0$ otherwise.
\item $\M_1$ and $\M_2$ generate identical tile sets at the origin, i.e., $\ts{(T_1)} = \ts{(T_2)}$ when $T_i := (\M_i,0)$ for $i=1,2$.
\end{enumerate}
\end{proposition}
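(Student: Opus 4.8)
My plan is to route all three conditions through the single matrix $P := \M_1^{-1}\M_2$ and prove the cycle $(1)\Rightarrow(2)\Rightarrow(3)\Rightarrow(1)$. The one external fact I will invoke is the standard identification of the $\ell^\infty\to\ell^\infty$ operator norm of a matrix $M$ with its largest absolute row sum $\max_i\sum_j|M_{ij}|$, together with the equivalent geometric description $|M|\le 1 \iff M[-1,1]^d\subseteq[-1,1]^d$. I will also use throughout that $\ts{(\M,0)}=\M[-\tfrac12,\tfrac12]^d$, since the $\ell^\infty$-ball of radius $\tfrac12$ is the cube $[-\tfrac12,\tfrac12]^d$.

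The heart of the matter is $(1)\Rightarrow(2)$, which I expect to be the main obstacle. Assuming $|P|=|P^{-1}|=1$, set $Q:=P^{-1}$; the hypotheses say that every row of $P$ and every row of $Q$ has $\ell^1$-norm at most $1$. I would read off the $(i,i)$-entry of the identity $PQ=I$, namely $\sum_j P_{ij}Q_{ji}=1$; combined with $\sum_j|P_{ij}|\le 1$ this forces some entry $Q_{j(i),i}$ to have modulus at least $1$, and since row $j(i)$ of $Q$ already has $\ell^1$-norm at most $1$, that entry must equal $\pm 1$ with every other entry of the row vanishing. Thus row $j(i)$ of $Q$ is a signed standard basis row vector $\pm e_i^{\,t}$; the map $i\mapsto j(i)$ is injective (distinct columns cannot be supported in a common row), hence a permutation of $\{1,\dots,d\}$, so every row of $Q$, and therefore of $P=Q^{-1}$, is a signed standard basis vector. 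This is precisely the structure of $P$ claimed in $(2)$, with $\sigma$ the induced permutation, and $\M_2=\M_1P$ by the definition of $P$. (Alternatively one could run a polytope argument — a linear automorphism of $[-1,1]^d$ permutes its $2d$ facets and fixes the set $\{\pm e_i\}$ of facet centroids — but the row-counting argument above is shorter and self-contained.)

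The remaining two implications are routine. For $(2)\Rightarrow(3)$: a matrix $P$ of the stated form satisfies $P[-\tfrac12,\tfrac12]^d=[-\tfrac12,\tfrac12]^d$ since it merely permutes coordinates and flips signs, so $\ts{(T_2)}=\M_2[-\tfrac12,\tfrac12]^d=\M_1P[-\tfrac12,\tfrac12]^d=\M_1[-\tfrac12,\tfrac12]^d=\ts{(T_1)}$. For $(3)\Rightarrow(1)$: the equality $\ts{(T_1)}=\ts{(T_2)}$ rescales to $P[-1,1]^d=[-1,1]^d$, and the two inclusions comprising this equality give $|P|\le 1$ and $|P^{-1}|\le 1$ by the geometric description above; then $1=|I|=|PP^{-1}|\le |P|\,|P^{-1}|\le 1$ pins both norms to exactly $1$. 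I foresee no difficulty here beyond keeping track of the factor $\tfrac12$.
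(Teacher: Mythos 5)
Your proposal is correct and follows essentially the same route as the paper: the cycle $(1)\Rightarrow(2)\Rightarrow(3)\Rightarrow(1)$ with $P=\M_1^{-1}\M_2$, the absolute-row-sum formula for the $\ell^\infty$ operator norm, and a pigeonhole argument locating unimodular entries whose rows must then be signed standard basis vectors. The only (inessential) difference is that you extract those entries from the diagonal of $PP^{-1}=I$ and conclude for $Q=P^{-1}$ first, whereas the paper first derives $|Px|=|x|$ for all $x$ and applies it to the basis vectors $e_j$ to structure $P$ directly; the remaining implications match the paper's up to phrasing tile sets as images of the cube rather than via norm identities.
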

\begin{proof}
{[$1 \Rightarrow 2$]} Let $P := \M_1^{-1} \M_2$. By hypothesis, $|P| \leq 1$. Moreover, $P^{-1} = \M_2^{-1} \M_1$, so $|P^{-1}| \leq 1$ as well. However, since $I = P P^{-1}$ and $1 \leq |P| |P^{-1}|$, it follows that $|P| = |P^{-1}| = 1$. But as $x = P^{-1} P x$ for each $x \in \R^d$, $|x| \leq |P^{-1}| |P x| \leq |P x|$, and consequently $|x| = |Px|$ for each $x$.  Because the norm $|\cdot|$ is the $\ell^\infty$ norm, $|P| = \max_{i} \sum_{j} |P_{ij}|$. For each $j$, let $e_j$ be the $j$-th standard basis vector. Since $1 = |e_j| = |P e_j|$, it follows that there must be some index $\sigma(j) \in \{1,\ldots,d\}$ for which $|P_{\sigma(j) j}| = 1$. Since the norm of $P$ is no more than $1$, this implies that $P_{\sigma(j)j'} = 0$ when $j' \neq j$. This forces $\sigma(j) \neq \sigma(j')$ when $j \neq j'$, and consequently $\sigma$ must be a permutation of $\{1,\ldots,d\}$.

{[$2 \Rightarrow 3$]} The identity $\M_2 = \M_1 P$ implies that $\M_2^{-1} = P^{-1} \M_1^{-1}$. But $P^{-1}$ must also clearly have the same form as $P$, namely some permutation of its rows converts it to a diagonal matrix with diagonal entries $\pm 1$ only. As a consequence, $|P^{-1} x| = |x|$ for all $x \in \R^d$, and thus $|\M_2^{-1} \eta| = |\M_1^{-1} \eta|$ for all $\eta \in \R^{d}$, which in particular forces the tiles at the origin generated by $\M_1$ and $\M_2$ to be identical.

{[$3 \Rightarrow 1$]} Invertibility of $\M_1$ implies that a vector $\eta$ has $|\M_1^{-1} \eta| \leq \frac{1}{2}$ (which is exactly the definition of the tile set of the tile $(\M_1,0)$)  if and only if $\eta = \M_1 \xi$ for some $\xi$ with $|\xi| \leq \frac{1}{2}$, and equality of tile sets implies that every $\xi$ with $|\xi|\leq \frac{1}{2}$ admits some $\xi'$ also with $|\xi'| \leq \frac{1}{2}$ such that $\M_1 \xi = \M_2 \xi'$. This implies $\xi' = \M_2^{-1} \M_1 \xi$, and consequently $|\M_2^{-1} \M_1 \xi| \leq \frac{1}{2}$ for all $\xi$ with $|\xi| \leq \frac{1}{2}$. This gives directly by the definition of the operator norm that $|\M_2^{-1} \M_1| \leq 1$. By symmetry, $|\M_1^{-1} \M_2| \leq 1$ as well. But since $I = \M_2^{-1} \M_1 \M_1^{-1} \M_2$, this means that $1 \leq |\M_2^{-1} \M_1| |\M_1^{-1} \M_2|$, giving that $1 = |\M_1^{-1} \M_2| = |\M_2^{-1} \M_1|$.
\end{proof}

\begin{lemma}[Metric on tile sets]
For any \label{metriclemma}
$\M_1,\M_2 \in \GL_{d}(\R)$, let \[ d(\M_1,\M_2) := \ln_2 \max \{ |\M_1^{-1} \M_2|, |\M_2^{-1} \M_1|\}. \] Then $d(\M_1,\M_2)$ is nonnegative, symmetric, and satisfies the triangle inequality. Moreover, every $x \in \R^d$ satisfies
\begin{equation} 2^{-d(\M_1,\M_2)} | \M_1^{-1} x| \leq |\M_2^{-1} x| \leq 2^{d(\M_1,\M_2)} |\M_1^{-1} x|. \label{comparability} \end{equation}
Given any tiles $T_1,T_2 \in \dot {\mathcal{T}}$ with $T_i := (\M_i,0)$ for $i=1,2$, let 
\begin{equation} d(T_1,T_2) := d(\M_1,\M_2). \label{metricdef} \end{equation}
If $T_1'$ and $T_2'$ are any tiles in $\dot {\mathcal T}$ such that $\ts{(T_i)} = \ts{(T_i')}$ for $i=1,2$, then $d(T_1,T_2) = d(T_1',T_2')$ in particular $d$ defines a metric on the space of tile sets at the origin.
\end{lemma}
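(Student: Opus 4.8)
The plan is to deduce all three assertions from submultiplicativity of the $\ell^\infty \to \ell^\infty$ operator norm together with Proposition \ref{pretileprop}. First I record the elementary fact that $I = \M_i^{-1}\M_j\M_j^{-1}\M_i$, so $1 = |I| \le |\M_i^{-1}\M_j|\,|\M_j^{-1}\M_i|$; consequently $\max\{|\M_1^{-1}\M_2|,|\M_2^{-1}\M_1|\} \ge 1$ and $d(\M_1,\M_2) \ge 0$, while symmetry is immediate from the definition. For the triangle inequality I would factor $\M_1^{-1}\M_3 = (\M_1^{-1}\M_2)(\M_2^{-1}\M_3)$ and $\M_3^{-1}\M_1 = (\M_3^{-1}\M_2)(\M_2^{-1}\M_1)$; submultiplicativity bounds each of the two quantities in the max defining $d(\M_1,\M_3)$ by a product of one quantity from the max for $d(\M_1,\M_2)$ and one from the max for $d(\M_2,\M_3)$, so taking the max and then $\ln_2$ gives $d(\M_1,\M_3) \le d(\M_1,\M_2) + d(\M_2,\M_3)$. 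For \eqref{comparability} I would write $\M_2^{-1}x = (\M_2^{-1}\M_1)(\M_1^{-1}x)$, whence $|\M_2^{-1}x| \le |\M_2^{-1}\M_1|\,|\M_1^{-1}x| \le 2^{d(\M_1,\M_2)}|\M_1^{-1}x|$, and the reverse inequality follows by interchanging the roles of $\M_1$ and $\M_2$.

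For the passage to tile sets, the key input is the equivalence of conditions (1) and (3) in Proposition \ref{pretileprop}: if $\ts{(T_i)} = \ts{(T_i')}$ then $|\M_i^{-1}\M_i'| = |(\M_i')^{-1}\M_i| = 1$, i.e.\ $d(\M_i,\M_i') = 0$ for $i=1,2$. Applying the triangle inequality twice then gives $d(\M_1,\M_2) \le d(\M_1,\M_1') + d(\M_1',\M_2') + d(\M_2',\M_2) = d(\M_1',\M_2')$, and the reverse bound by symmetry, so $d(T_1,T_2)$ depends only on the tile sets $\ts{(T_1)}$ and $\ts{(T_2)}$ and \eqref{metricdef} is well defined. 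It is nonnegative, symmetric, and satisfies the triangle inequality by the first part; to confirm it is a genuine metric I must check the identity of indiscernibles. One direction is trivial, since $d((\M,0),(\M,0)) = \ln_2 |I| = 0$. For the other, $d(\M_1,\M_2) = 0$ means $\max\{|\M_1^{-1}\M_2|,|\M_2^{-1}\M_1|\} = 1$, and combined with $|\M_1^{-1}\M_2|\,|\M_2^{-1}\M_1| \ge 1$ this forces both norms to equal $1$; the implication (1)$\Rightarrow$(3) of Proposition \ref{pretileprop} then yields $\ts{(T_1)} = \ts{(T_2)}$.

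I do not expect a real obstacle here: the entire content is the submultiplicativity estimate plus bookkeeping around Proposition \ref{pretileprop}. The only subtlety worth flagging is that $d$ does \emph{not} separate points of $\GL_d(\R)$ — two matrices differing by a signed permutation matrix have distance zero — so the reduction to tile sets is essential rather than cosmetic, and it is precisely Proposition \ref{pretileprop} that guarantees the identity of indiscernibles survives that reduction.
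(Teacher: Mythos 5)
Your proposal is correct and follows essentially the same route as the paper's proof: nonnegativity and the triangle inequality from submultiplicativity of the $\ell^\infty$ operator norm, the comparability bound \eqref{comparability} by factoring $\M_2^{-1}x = (\M_2^{-1}\M_1)(\M_1^{-1}x)$, and well-definedness plus identity of indiscernibles on tile sets via the equivalence (1)$\Leftrightarrow$(3) of Proposition \ref{pretileprop} together with two applications of the triangle inequality. No gaps; your closing remark about $d$ not separating points of $\GL_d(\R)$ is exactly the reason the reduction to tile sets is needed.
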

\begin{proof}
First we show the properties of $d$ as defined on pairs of invertible matrices $(\M_1,\M_2)$.
Nonnegativity of $d$ holds because $I = \M_1^{-1} \M_2 \M_2^{-1} \M_1$ implies $1 \leq |\M_1^{-1} \M_2| |\M_2^{-1} \M_1|$. Since the product is at least one, the maximum must be at least one as well, so $d(\M_1,\M_2) \geq 0$. In a similarly immediate way, taking logarithms of the inequalities $|\M_1^{-1} \M_3| \leq |\M_1^{-1} \M_2| |\M_2^{-1} \M_3|$ and $|\M_3^{-1} \M_1| \leq |\M_3^{-1} \M_2| |\M_2^{-1} \M_1|$ implies the triangle inequality. Finally, for any $\M_1,\M_2 \in \GL_d(\R)$, it must be the case that
\[ |\M_2^{-1} x| = |\M_2^{-1} \M_1 \M_1^{-1} x| \leq |\M_2^{-1} \M_1| |\M_1^{-1} x| \leq 2^{d(\M_1,\M_2)} |\M_1^{-1} x| \]
and
\[ |\M_1^{-1} x| = |\M_1^{-1} \M_2 \M_2^{-1} x| \leq |\M_1^{-1} \M_2| |\M_2^{-1} x| \leq 2^{d(\M_1,\M_2)} |\M_2^{-1} x| \] 
which together imply \eqref{comparability}.

Now suppose $T_1,T_2 \in \dot{\mathcal{T}}$ are tiles at the origin generated by matrices $\M_1$ and $\M_2$, respectively. If $\M_1'$ and $\M_2'$ are any other matrices generating tiles $T_1'$ and $T_2'$ with the property that $\ts{(T_i)} = \ts{(T_i')}$ for $i=1,2$, then by Proposition \ref{pretileprop}, $d(\M_1,\M_1') = 0 = d(\M_2,\M_2')$, and by the triangle inequality it follows that $d(\M_1,\M_2) \leq d(\M_1,\M_1') + d(\M_1',\M_2') + d(\M_2',\M_2) = d(\M_1',\M_2')$. By symmetry $d(\M_1,\M_2) = d(\M_1',\M_2')$ and consequently $d(\ts{(T_1)},\ts{(T_2)})$ is well-defined simply by fixing it to equal $d(\M_1,\M_2)$ for any choice of $\M_1$ and $\M_2$ generating the given tile sets $\ts{(T_1)},\ts{(T_2)}$.
Symmetry and the triangle inequality for $d$ when regarded as a function of pairs of tile sets follow immediately from the corresponding inequalities for matrices, and $d(\ts{(T_1)},\ts{(T_2)}) = 0$ if and only if $\max \{|\M_1^{-1} \M_2|,|\M_2^{-1} \M_1|\} \leq 1$ for suitable representative $\M_1$ and $\M_2$, which as seen before implies that $|\M_1^{-1} \M_2| = |\M_2^{-1} \M_1| = 1$, meaning that $\ts{(T_1)} = \ts{(T_2)}$ (also by Proposition \ref{pretileprop}).
\end{proof}

We come now to the main task of this section, which is to count the overlap multiplicity of dilated tiles $t T$ for $T \in {\mathcal T}_*$. When tiles are the similar shapes, there is an automatic algebraic bound on the overlap which comes directly from the fact that tiles have bounded multiplicity when not dilated. This is made precise in Proposition \ref{overlapcountprop}. Proposition \ref{overlapcount} then extends the counting result to the full generality of admissible tilings.
\begin{proposition}[Overlap of dilated tiles of similar shape] \label{overlapcountprop}
Suppose that $\mathcal{T}_*$ is an admissible tiling of $\R^d$ and that $\mathcal{T}_*' \subset \mathcal{T}_{*}$ is a subset for which there exists some metric ball $B_r$ of radius $r$ inside the space of origin-centered tile sets such that $\ts{\dot T}$ belongs to $B_r$ for all $T \in \mathcal{T}_*'$. Then for any $t \geq 1$ and any $\xi \in \R^d$, the subset of $\mathcal{T}_*'$ consisting of tiles $T$ with the property that $\xi  \in \ts{(t T)}$ is finite and has cardinality not exceeding $\const (2^{2r+1} t)^d$.
\end{proposition}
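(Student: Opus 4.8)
The plan is a volume-packing argument: since the dilated tiles $\ts{(tT)}$ all contain the fixed point $\xi$, the \emph{undilated} tile sets $\ts{T}$ for the relevant $T$ are all trapped inside one box centered at $\xi$ of size controlled by $t$ and $r$; as these $\ts{T}$ have overlap at most $\const$ and individually have measure bounded below, only boundedly many fit. To set this up I would fix a matrix $D_0 \in \GL_d(\R)$ generating the origin-centered tile set which is the \emph{center} of $B_r$. For every $T = (D_T,\xi_T) \in \mathcal{T}_*'$, the hypothesis $\ts{\dot T} \in B_r$ and Lemma \ref{metriclemma} give $d(D_0,D_T) \le r$, so $|D_0^{-1}D_T| \le 2^r$ and $|D_T^{-1}D_0| \le 2^r$. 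Using $|\det M| \le |M|^d$ for the $\ell^\infty$ operator norm (the image of the unit cube under $M$ lies in the $\ell^\infty$-ball of radius $|M|/2$), the second estimate yields $|\det D_T| \ge 2^{-rd}|\det D_0|$, i.e.\ every $\ts{T}$ with $T \in \mathcal{T}_*'$ has measure at least $2^{-rd}|\det D_0|$.

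Next, fix $\xi \in \R^d$ and $t \ge 1$ and put $\mathcal S := \set{T \in \mathcal{T}_*'}{\xi \in \ts{(tT)}}$. For $T = (D_T,\xi_T) \in \mathcal S$, the identity $\ts{(tT)} = \set{\eta}{|D_T^{-1}(\eta-\xi_T)| \le t/2}$ gives $|D_T^{-1}(\xi-\xi_T)| \le t/2$; combining with $|D_T^{-1}(\eta-\xi_T)| \le \tfrac{1}{2}$ for $\eta \in \ts{T}$ and with $t \ge 1$, the triangle inequality gives $|D_T^{-1}(\eta-\xi)| \le t$ and hence $|D_0^{-1}(\eta-\xi)| \le |D_0^{-1}D_T|\,|D_T^{-1}(\eta-\xi)| \le 2^r t$. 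So $\ts{T} \subset Q := \set{\eta \in \R^d}{|D_0^{-1}(\eta-\xi)| \le 2^r t}$ for all $T \in \mathcal S$, and the change of variables $\eta \mapsto D_0^{-1}(\eta-\xi)$ gives $|Q| = (2^{r+1}t)^d|\det D_0|$.

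The final step integrates. Admissibility of $\mathcal{T}_*$ gives, for a.e.\ $\eta \in \R^d$, that at most $\const$ tiles of $\mathcal{T}_*$ — hence at most $\const$ tiles of the subfamily $\mathcal S \subset \mathcal{T}_*$ — have tile set containing $\eta$, so by monotone convergence $\sum_{T \in \mathcal S} |\ts{T}| = \int_{\R^d}\sum_{T \in \mathcal S}\one_T(\eta)\,d\eta = \int_Q \sum_{T \in \mathcal S}\one_T(\eta)\,d\eta \le \const\,|Q|$. Since each summand on the left is at least $2^{-rd}|\det D_0| > 0$, the set $\mathcal S$ is finite and $|\mathcal S| \le \const\,2^{rd}(2^{r+1}t)^d = \const\,(2^{2r+1}t)^d$, which is the assertion.

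I do not expect a genuine obstacle; the one point needing care is to measure all comparabilities against the \emph{center} $D_0$ of $B_r$ (distance $\le r$) rather than against an arbitrary tile of $\mathcal{T}_*'$ (distance only $\le 2r$), since the latter would lose a factor $2^{rd}$ and give only the cruder bound $\const\,(2^{4r+1}t)^d$. A secondary point is to invoke the a.e.\ overlap bound only under an integral sign, which is exactly what happens above.
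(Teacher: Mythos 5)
Your proof is correct and follows essentially the same route as the paper: comparability with the center of $B_r$ gives the lower volume bound $2^{-rd}|\det D_0|$ for each tile set, all relevant tile sets are trapped in a single box of volume $(2^{r+1}t)^d|\det D_0|$ around $\xi$, and the a.e.\ overlap bound $\const$ is used under the integral to count. The only (cosmetic) difference is that you get the volume lower bound from $|\det M|\le |M|^d$, whereas the paper notes that each $\ts{T}$ contains a copy of $\ts{(2^{-r}\dot T_0)}$; both yield the same constant.
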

\begin{proof}
Fix $\xi$ and $T_1,\ldots,T_N \in \mathcal{T}_*'$ such that $\xi \in \ts{(t T_i)}$ for each $i$. Let each $T_i$ have the form $(\M_i,\xi_i)$ for appropriate choices of $\M_i$ and $\xi_i$ and let $T_0 := (\M_0,\xi)$ be such that $\ts{(\dot T_0)}$ is the center of $B_r$ (which may not belong to ${\mathcal{T}}_*$). By definition of the metric, $d(\M_i,\M_0) < r$ for each $i=1,\ldots,N$.

The proof is essentially geometric.
Suppose that $\eta$ is any point in the tile set $\ts{(t T_i)}$. This implies  $t^{-1} |\M_i^{-1} (\eta - \xi_i)| \leq 1/2$. Because $\xi$ also belongs to $\ts{(t T_i)}$, the triangle inequality implies that $t^{-1} |\M_i^{-1}(\eta - \xi)| \leq 1$.  By \eqref{comparability}, this means that $t^{-1} |\M_0^{-1}(\eta - \xi)| \leq 2^{d(\M_0,\M_i)} < 2^r$. In particular, this means that every tile set $\ts{(t T_i)}$ is contained in the tile set $\ts{(2^{r+1} t T_0)}$, which happens to have volume less than $(2^{r+1} t)^d |\det \M_0|$. Since $t \geq 1$, the union $\bigcup_{i=1}^N \ts{(T_i)}$ is also contained this single large set tile centered at $\xi$. This means that
\begin{equation}
\begin{split}
 \int {\const}^{-1} \sum_{i=1}^N \one_{T_i} (\xi') d \xi' & \leq  \int \indicator_{\bigcup_{i=1}^N \ts{(T_i)}}(\xi') d \xi' \\ & \leq \int \sum_{i=1}^N \one_{T_i} (\xi') d \xi' \leq \const \int \indicator_{2^{r+1} t T_0}(\xi') d \xi' 
 \end{split} \label{comparison}
 \end{equation}
because the sum of indicator functions $\one_{T_i}(\xi')$ is assumed to be less than or equal to $\const$ for almost every $\xi'$ and because the tile sets $\ts{(T_i)}$ are all contained in the tile set of $2^r t T_0$. But now the inequality $|\M_i^{-1} x| \leq 2^{r} |\M_0^{-1} x|$ for each $x$ implies that the tile set $\ts{(\dot T_i)}$ must contain a parallelepiped with the same volume as $\ts{(2^{-r} T_0)}$ but centered at $\xi_i$. As a consequence, $|\ts{(T_i)}| \geq 2^{-rd}|\det \M_0|$, where $|\cdot|$ on the left-hand side indicates Lebesgue measure. Therefore
\[  N \const^{-1} 2^{-rd} |\det \M_0| \leq  \left| \bigcup_{i=1}^N \ts{(T_i)} \right| \leq (2^{r+1} t)^d |\det \M_0|, \]
and hence it follows that $N \leq \const \left( 2^{2r + 1} t \right)^d$
as asserted.
\end{proof}

\begin{proposition}[Overlap of dilated tiles, any shape] \label{overlapcount}
Suppose $\mathcal{T}_*$ is an admissible tiling of $\R^d$. There exists a constant $\const'< \infty$ depending only on $\const$ and $d$ such that the following holds.
For any real $t \geq 1$ and $\xi \in \R^d$, let $\mathcal{O}_\xi(t)$ be the set of tiles $T \in \mathcal{T}_*$ such that $\xi \in \ts{(t T)}$. Then for almost every $\xi \in \R^d$, $\mathcal{O}_\xi(t)$ has cardinality not exceeding $\const' t^d K(t)$ for each $t \geq 1$.
\end{proposition}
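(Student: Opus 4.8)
The plan is to partition $\mathcal{O}_\xi(t)$ into two subfamilies according to whether $\rho(T)\geq t$ or $\rho(T)< t$, to bound the cardinality of each subfamily by reducing it to Proposition~\ref{overlapcountprop} (which counts overlaps of dilated tiles of comparable shape), and then to add the two bounds.

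For the first subfamily I would use the covering part of the definition of admissible tiling: for a.e.\ $\xi\in\R^d$ there is a tile $T_*\in\mathcal{T}_*$ with $\xi\in\ts{(1-\theta)T_*}\subseteq\ts{T_*}$, and I fix such a $T_*$. Now let $T\in\mathcal{O}_\xi(t)$ with $\rho(T)\geq t$. Since $t\leq\rho(T)$, the tile set $\ts{(tT)}$ is contained in $\ts{(\rho(T)T)}$, so $\xi\in\ts{(\rho(T)T)}$; as $\xi\in\ts{T_*}$ as well, the tile sets of $T_*$ and $\rho(T)T$ intersect, and the first regularity condition forces $d(\dot T,\dot T_*)\leq\const$. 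Thus every such $\ts{\dot T}$ lies in the single metric ball of radius $\const$ centered at $\ts{\dot T_*}$, and Proposition~\ref{overlapcountprop}, applied with $r=\const$ and $\mathcal{T}_*'$ equal to this subfamily, bounds the number of $T\in\mathcal{O}_\xi(t)$ with $\rho(T)\geq t$ by $\const\,(2^{2\const+1}t)^d$.

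For the second subfamily I would invoke the second regularity condition: the set $\{\ts{\dot T}:T\in\mathcal{T}_*,\ \rho(T)\leq t\}$ can be covered by at most $K(t)$ metric balls of radius $\const$. I would partition the tiles $T\in\mathcal{T}_*$ with $\rho(T)\leq t$ into at most $K(t)$ subfamilies, each consisting of tiles whose origin-centered tile set lies in one fixed such ball, and apply Proposition~\ref{overlapcountprop} (again with $r=\const$) to each subfamily. Each application contributes at most $\const\,(2^{2\const+1}t)^d$ tiles to $\mathcal{O}_\xi(t)$, so the total number of $T\in\mathcal{O}_\xi(t)$ with $\rho(T)\leq t$ is at most $K(t)\,\const\,(2^{2\const+1}t)^d$.

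Adding the two estimates and using $K(t)\geq 1$ gives $\#\mathcal{O}_\xi(t)\leq 2\,\const\,2^{d(2\const+1)}\,t^d K(t)$, so one may take $\const':=2\,\const\,2^{d(2\const+1)}$, which depends only on $\const$ and $d$ as required; finiteness of $\mathcal{O}_\xi(t)$ is immediate from the finiteness of each piece. The only point where ``a.e.\ $\xi$'' is used is the selection of $T_*$ from the covering hypothesis — everything else holds for every $\xi$. I do not expect a genuine obstacle here: the geometric work has already been done inside Proposition~\ref{overlapcountprop}, and the sole thing requiring care is corralling all the tiles with $\rho(T)\geq t$ into a common metric ball, which is exactly what combining the first regularity condition with the reference tile $T_*$ accomplishes.
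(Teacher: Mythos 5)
Your proposal is correct and follows essentially the same route as the paper: fix a reference tile $T_\xi$ containing $\xi$ from the covering condition, handle one part of $\mathcal{O}_\xi(t)$ via the first regularity condition (all such $\ts{\dot T}$ land in the ball of radius $\const$ around $\ts{\dot T_\xi}$) and the other via the $K(t)$-ball covering of the second regularity condition, applying Proposition~\ref{overlapcountprop} with $r=\const$ in both cases. The only cosmetic difference is that you split by $\rho(T)\geq t$ versus $\rho(T)<t$, whereas the paper splits by whether $\ts{(\rho(T)T)}$ meets $\ts{(T_\xi)}$ and then observes the empty-intersection case forces $t>\rho(T)$; the two decompositions interlock and yield the identical constant $\const'=2\,\const\,2^{d(2\const+1)}$.
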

\begin{proof}
Because $\mathcal{T}_*$ is admissible, almost every point $\xi$ necessarily belongs to $\ts{(T_\xi)}$ for some $T_\xi \in \mathcal{T}_*$ (though note that $\xi$ is not necessarily the \textit{center} of the tile set but merely belongs to it).  Divide the set $\mathcal O_\xi(t)$ into two parts $\mathcal{O}_{\xi}^{\mathrm{I}}(t)$ and $\mathcal{O}_{\xi}^{\mathrm{II}}(t)$: the subset of those $T$ for which $\ts{(\rho(T) T)} \cap \ts{(T_\xi)}$ is nonempty and those $T$ for which $\ts{(\rho(T) T)} \cap \ts{(T_\xi)}$ is empty, respectively.
For $T \in \mathcal{O}_{\xi}^{\mathrm{I}}(t)$, the regularity conditions on $\mathcal{T}_*$ imply that $d(\dot T, \dot T_\xi) \leq \const$. By Proposition \ref{overlapcountprop}, the cardinality of $\mathcal{O}_{\xi}^{\mathrm{I}}(t)$ at most $\const (2^{2\const+1} t)^d$.

For $T \in \mathcal{O}_{\xi}^{\mathrm{II}}(t)$, $\ts{(\rho(T) T)} \cap \ts{(T_\xi)}$ is empty, meaning in particular that it cannot contain $\xi$. Since $\ts{(t T)} \cap \ts{(T_\xi)}$ is \textit{not} empty (it contains $\xi$), it follows that $t > \rho(T)$ for all  $T \in \mathcal{O}_{\xi}^{\mathrm{II}}(t)$. By admissibility of the tiling, this means that there exist finitely many tiles $T_1,\ldots,T_K$ centered at the origin with $K \leq K(t)$ such that every $T \in \mathcal{O}_{\xi}^{\mathrm{II}}(t)$ has $d(\dot T, T_i) < \const$ for some $i \in \{1,\ldots,K\}$. Applying Proposition \ref{overlapcountprop} to the subset of $\mathcal{O}_{\xi}^{\mathrm{II}}(t)$ whose translated tiles are close to $T_i$ and summing over $i$ gives that $\# \mathcal{O}_{\xi}^{\mathrm{II}}(t) \leq \const (2^{2\const+1} t)^d K(t)$. Summing the estimates for $\# \mathcal{O}_{\xi}^{\mathrm{I}}(t)$ and $\mathcal{O}_{\xi}^{\mathrm{II}}(t)$ and using the fact that $K(t) \geq 1$ gives the proposition with $\const' := 2 \const 2^{d(2\const+1)}$.
\end{proof}

\section{Proof of Theorem \ref{maintheorem}}
\label{mainproofsec}
\begin{proof}[Proof of Theorem \ref{maintheorem}]
Theorem \ref{maintheorem} will be established as a consequence of Theorem \ref{compactthm} by taking $g = \eta$ for a suitable choice of $\eta$ given by Corollary \ref{almostbl}. Just as in the proof of Theorem \ref{compactthm}, one has that
\begin{equation}
\sum_{T \in \tilefam} |\widehat{\eta}^T(\xi')|^2 \one_{(1-\theta) T}(\xi') \geq  \left( 1 - \frac{C'}{W(\epsilon^{-1})} \right)^{2d} \sum_{T \in \tilefam} \one_{(1-\theta)T}(\xi'), \label{lowerb}
\end{equation}
for every $\xi' \in \R^d$, and the sum on the right-hand side is always at least one almost everywhere because $\tilefam$ is admissible.
As for the upper frame bound, it is useful to divide the sum into terms $T$ for which $\xi' \in \ts{T}$ and those for which $\xi' \not \in \ts{T}$. Then
\begin{equation}
\sum_{T \in \tilefam} |\widehat{\eta}^T(\xi')|^2  \leq \sum_{T \in {\mathcal T}_*} \one_{T}(\xi') + \sum_{\xi' \not \in \ts{T}} |\widehat{\eta}^{T}(\xi')|^2.  \label{split1}
\end{equation}
Admissibility of $\mathcal{T}_*$ means that the first sum on the right-hand side is at most $\const$ for almost every $\xi'$. As for the second sum, suppose that $T = (D,\xi)$ is a tile such that
$\xi' \not \in \ts{T}$. This means that $|{D}^{-1}(\xi' - \xi)| > \frac{1}{2}$. In this regime, the ``tail'' inequality \eqref{schwartztail} holds for $\eta$ and consequently
\[ |\widehat{\eta}^{T}(\xi')|^2 \leq \frac{(C')^2}{(W(\epsilon^{-1} |{D}^{-1}(\xi' - \xi)|))^2}. \]
This subsequently implies that
\begin{align*}
 |\widehat{\eta}^{T}(\xi')|^2 & \leq \int_{2 |{D}^{-1} (\xi' - \xi)|}^{4 |{D}^{-1} (\xi' - \xi)|} \frac{(C')^2}{(W(\epsilon^{-1} |{D}^{-1}(\xi' - \xi)|))^2} dt \\
 & \leq   \int_{1}^{\infty} \frac{(C')^2 \one_{t T}(\xi')}{(W(\epsilon^{-1} t /4))^2} dt
 \end{align*}
 since $\one_{t T}(\xi')$ is one when $t \geq 2 |{D}^{-1}(\xi' - \xi)| \geq 1$, the interval from $2 |{D}^{-1} (\xi' - \xi)|$ to $4 |{D}^{-1} (\xi' - \xi)|$ has length at least one, and since $W(\epsilon^{-1} t/4) \leq W(\epsilon^{-1} |{D}^{-1}(\xi' - \xi)|)$ when $t \leq 4 |{D}^{-1}(\xi' - \xi)|$. Inserting this inequality into the right-hand side of \eqref{split1} and interchanging summation and integration gives that almost every $\xi' \in \R^d$ satisfies
 \begin{align}
  \sum_{T \in \tilefam} |\widehat{\eta}^T(\xi')|^2 & \leq \mathcal{C} + \int_1^\infty \frac{(C')^2 \sum_{T \in \tilefam} \one_{t T}(\xi')}{(W( \epsilon^{-1} t / 4 \epsilon ))^2} dt \nonumber \\
  & = \mathcal{C} + \int_1^\infty \frac{(C')^2 \#\mathcal{O}_{\xi'}(t)}{(W( \epsilon^{-1} t /4))^2} dt. \label{backhere}
  \end{align}
Let $K(t)$ be the function from the definition of the admissible tiling $\tilefam$ and fix
\[ W(t) := \begin{cases} 1 & |t| \leq 1, \\ e^{\sqrt{|t|}-1} K(|t|) & |t| \geq 1.\end{cases} \]
Clearly this $W(t)$ is even and radially nondecreasing, has $W(t) = 1$ on $[-1,1]$, grows faster than any power of $t$ (because $K$ is bounded below) and has
\[ \int_0^\infty \frac{\ln W(t)}{t^2} dt = \int_1^\infty \frac{\sqrt{t} - 1 + \ln K(t)}{t^2} dt = 1 + \int_1^\infty \frac{\ln K(t)}{t^2} dt < \infty. \]
For this $W$ and any $\epsilon \in (0,\frac{1}{4})$, it follows that 
\[  \int_1^\infty \frac{(C')^2 \#\mathcal{O}_\xi(t)}{W(\epsilon^{-1} t/4) } dt \leq (C')^2 \int_1^\infty \frac{\mathcal{C}' t^d}{e^{\sqrt{t} - 1}} dt,\]
by virtue of the upper bound for $\#\mathcal{O}_\xi(t)$ given by Proposition \ref{overlapcount}. Applying this inequality to \eqref{backhere} (noting that $(W( \epsilon^{-1} t/4))^{-2} \leq (W(\epsilon^{-1}/4))^{-1} (W(\epsilon^{-1} t/4))^{-1}$ on the domain of the integral) gives that
\[ \sum_{T \in \tilefam} |\widehat{\eta}^T(\xi)|^2 \leq \mathcal{C} + C'' e^{- \frac{1}{2 \sqrt{\epsilon}} + 1 } \]
for some constant $C''$ depending only on $\mathcal C$, $K(t)$, and $d$. Combining this with \eqref{lowerb} gives that 
\[ \left( 1 - \frac{C'}{W(\epsilon^{-1})} \right)^{2d} \leq \sum_{T \in \tilefam} |\widehat{\eta}^T(\xi')|^2 \one_{(1-\theta)T}(\xi')  \leq \sum_{T \in \tilefam} |\widehat{\eta}^T(\xi')|^2 \leq \mathcal{C} + C'' e^{- \frac{1}{2 \sqrt{\epsilon}} + 1 } \]
for almost every $\xi' \in \R^d$. For a suitable choice of $\epsilon$ (depending on $\const,K,\theta$, and $d$), the quantity on the left can be taken to be arbitrarily close to $1$ and the quantity on the right can be taken as close as desired to $\const$. Applying Theorem \ref{compactthm} for its own suitably small $\epsilon$ gives \eqref{cframe}, which in this case corresponds to the frame condition \eqref{themainframe} promised by Theorem \ref{maintheorem}.
\end{proof}

\begin{bibdiv}
\begin{biblist}

\bib{MR1350699}{article}{
      author={Benedetto, John~J.},
      author={Heil, Christopher},
      author={Walnut, David~F.},
       title={Differentiation and the {B}alian-{L}ow theorem},
        date={1995},
        ISSN={1069-5869},
     journal={J. Fourier Anal. Appl.},
      volume={1},
      number={4},
       pages={355\ndash 402},
         url={https://doi.org/10.1007/s00041-001-4016-5},
      review={\MR{1350699}},
}

\bib{MR4044680}{article}{
      author={Bytchenkoff, Dimitri},
      author={Voigtlaender, Felix},
       title={Design and properties of wave packet smoothness spaces},
        date={2020},
        ISSN={0021-7824},
     journal={J. Math. Pures Appl. (9)},
      volume={133},
       pages={185\ndash 262},
         url={https://doi.org/10.1016/j.matpur.2019.05.006},
      review={\MR{4044680}},
}

\bib{MR1066587}{article}{
      author={Daubechies, Ingrid},
       title={The wavelet transform, time-frequency localization and signal
  analysis},
        date={1990},
        ISSN={0018-9448},
     journal={IEEE Trans. Inform. Theory},
      volume={36},
      number={5},
       pages={961\ndash 1005},
         url={https://doi.org/10.1109/18.57199},
      review={\MR{1066587}},
}

\bib{MR2362408}{article}{
      author={Demanet, Laurent},
      author={Ying, Lexing},
       title={Wave atoms and sparsity of oscillatory patterns},
        date={2007},
        ISSN={1063-5203},
     journal={Appl. Comput. Harmon. Anal.},
      volume={23},
      number={3},
       pages={368\ndash 387},
         url={https://doi.org/10.1016/j.acha.2007.03.003},
      review={\MR{2362408}},
}

\bib{MR2179584}{article}{
      author={Feichtinger, Hans~G.},
      author={Fornasier, Massimo},
       title={Flexible {G}abor-wavelet atomic decompositions for
  {$L^2$}-{S}obolev spaces},
        date={2006},
        ISSN={0373-3114},
     journal={Ann. Mat. Pura Appl. (4)},
      volume={185},
      number={1},
       pages={105\ndash 131},
         url={https://doi.org/10.1007/s10231-004-0130-8},
      review={\MR{2179584}},
}

\bib{MR4302195}{article}{
      author={Gressman, Philip~T.},
      author={Urheim, Ellen},
       title={Multilinear oscillatory integral operators and geometric
  stability},
        date={2021},
        ISSN={1050-6926},
     journal={J. Geom. Anal.},
      volume={31},
      number={9},
       pages={8710\ndash 8734},
         url={https://doi.org/10.1007/s12220-020-00383-5},
      review={\MR{4302195}},
}

\bib{MR3010124}{article}{
      author={Gr\"{o}chenig, Karlheinz},
      author={Malinnikova, Eugenia},
       title={Phase space localization of {R}iesz bases for
  {$L^2(\Bbb{R}^d)$}},
        date={2013},
        ISSN={0213-2230},
     journal={Rev. Mat. Iberoam.},
      volume={29},
      number={1},
       pages={115\ndash 134},
         url={https://doi.org/10.4171/RMI/715},
      review={\MR{3010124}},
}

\bib{guth}{misc}{
      author={Guth, Larry},
       title={Decoupling estimates in {F}ourier analysis},
   publisher={arXiv:2207.00652},
        date={2022},
         url={https://arxiv.org/abs/2207.00652},
}

\bib{MR1916862}{article}{
      author={Hern\'{a}ndez, Eugenio},
      author={Labate, Demetrio},
      author={Weiss, Guido},
       title={A unified characterization of reproducing systems generated by a
  finite family. {II}},
        date={2002},
        ISSN={1050-6926},
     journal={J. Geom. Anal.},
      volume={12},
      number={4},
       pages={615\ndash 662},
         url={https://doi.org/10.1007/BF02930656},
      review={\MR{1916862}},
}

\bib{MR1574706}{article}{
      author={Ingham, A.~E.},
       title={A {N}ote on {F}ourier {T}ransforms},
        date={1934},
        ISSN={0024-6107},
     journal={J. London Math. Soc.},
      volume={9},
      number={1},
       pages={29\ndash 32},
         url={https://doi.org/10.1112/jlms/s1-9.1.29},
      review={\MR{1574706}},
}

\bib{MR2066831}{incollection}{
      author={Labate, Demetrio},
      author={Weiss, Guido},
      author={Wilson, Edward},
       title={An approach to the study of wave packet systems},
        date={2004},
   booktitle={Wavelets, frames and operator theory},
      series={Contemp. Math.},
      volume={345},
   publisher={Amer. Math. Soc., Providence, RI},
       pages={215\ndash 235},
         url={https://doi.org/10.1090/conm/345/06250},
      review={\MR{2066831}},
}

\bib{MR3189276}{article}{
      author={Nielsen, Morten},
       title={Frames for decomposition spaces generated by a single function},
        date={2014},
        ISSN={0010-0757},
     journal={Collect. Math.},
      volume={65},
      number={2},
       pages={183\ndash 201},
         url={https://doi.org/10.1007/s13348-013-0091-6},
      review={\MR{3189276}},
}

\bib{MR2885560}{article}{
      author={Nielsen, Morten},
      author={Rasmussen, Kenneth~N.},
       title={Compactly supported frames for decomposition spaces},
        date={2012},
        ISSN={1069-5869},
     journal={J. Fourier Anal. Appl.},
      volume={18},
      number={1},
       pages={87\ndash 117},
         url={https://doi.org/10.1007/s00041-011-9190-5},
      review={\MR{2885560}},
}

\end{biblist}
\end{bibdiv}

\end{document}